\documentclass{aims}
\usepackage{amsmath, amssymb, latexsym, enumerate, mathrsfs}
  \usepackage{paralist}
  \usepackage{graphics} 
  \usepackage{epsfig} 
\usepackage{graphicx}  \usepackage{epstopdf}
 \usepackage[colorlinks=true]{hyperref}
\hypersetup{urlcolor=blue, citecolor=red}

  \textheight=8.2 true in
   \textwidth=5.0 true in
    \topmargin 30pt
     \setcounter{page}{1}



\newtheorem{theorem}{Theorem}[section]

\newtheorem{lemma}[theorem]{Lemma}
\newtheorem{proposition}[theorem]{Proposition}

\newtheorem{problem}{Problem}
\newtheorem{definition}[theorem]{Definition}
\newtheorem{assumption}[theorem]{Assumption}
\newtheorem{remark}[theorem]{Remark}

\title[On the attainable distributions of controlled-diffusion processes] 
      {On the attainable distributions of controlled-diffusion processes pertaining to a chain of distributed systems}

\author[Getachew K. Befekadu and Eduardo L. Pasiliao]{}

\subjclass{Primary: 35K10, 35K65, 49J20, 60J60, 93E20, 94A17.}
 \keywords{Diffusion processes, distributed systems, parabolic equations, relative entropy, stochastic control problem.}

 \email{gbefekadu@ufl.edu}
 \email{pasiliao@eglin.af.mil}

\thanks{This research was supported in part by the Air Force Research Laboratory (AFRL). This work is, in some sense, a continuation of our previous paper \cite{BefP15}}

\thanks{$^*$ Corresponding author: Getachew K. Befekadu}

\begin{document}
\maketitle

\centerline{\scshape Getachew K. Befekadu}
\medskip
{\footnotesize
 \centerline{Department of Mechanical and Aerospace Engineering} 
   \centerline{University of Florida - REEF}
   \centerline{1350 N. Poquito Rd, Shalimar, FL 32579, USA}
} 

\medskip

\centerline{\scshape Eduardo L. Pasiliao}
\medskip
{\footnotesize
 \centerline{ Munitions Directorate, Air Force Research Laboratory}
   \centerline{101 West Eglin Boulevard,}
   \centerline{Eglin AFB, FL 32542, USA}
}

\bigskip

 \centerline{(Communicated by the associate editor name)}

\begin{abstract}
We consider a controlled-diffusion process pertaining to a chain of distributed systems with random perturbations that satisfies a weak H\"{o}rmander type condition. In particular, we consider a stochastic control problem with the following objectives that we would like to achieve. The first one being of a reachability-type that consists of determining a set of attainable distributions at a given time starting from an initial distribution; while the second one involves minimizing the relative entropy subject to the initial and desired final attainable distributions. Using the logarithmic transformations approach from Fleming, we provide a sufficient condition on the existence of an optimal admissible control for such a stochastic control problem which is amounted to changing the drift by a certain perturbation suggested by Jamison in the context of reciprocal processes. Moreover, such a perturbation coincides with a minimum energy control among all admissible controls forcing the controlled-diffusion process to the desired final attainable distribution starting from the initial distribution. Finally, we briefly remark  on the invariance property of the path-space measure for such a diffusion process pertaining to the chain of distributed systems.
\end{abstract}

\section{Introduction}
The problem of forcing a diffusion process to a final attainable configuration starting from an initial distribution has been of particular interest from both physical and mathematical points of view (e.g., see \cite{Sch31} for the original formulation of this problem). Notably, this problem and its variants have been widely studied in the literature (e.g., see \cite{Leo12}, \cite{Leo14} \cite{MikT06} or \cite{Mik15} in the context of stochastic optimal transportation problem;  see also \cite{Nag89} and \cite{Wak89} in the context of variational characterization of Schr\"{o}dinger processes; and \cite{FleS85} or \cite{Dai91} in the context of controllability of systems governed by parabolic PDEs). The setting of these papers, which is the rationale behind our approach, consists of constructing a class of stochastic processes (i.e., a reciprocal precess along the works of \cite{Jam74}, \cite{Jam75} and \cite{Beu60}) in connection with stochastic optimal control theory with respect to the desired end-point distributions or in connection with that of the problem of minimizing relative entropy subject to the controlled and reference diffusion processes (e.g., see \cite{Leo14}, \cite{MikT06} or \cite{Dai91} for additional discussions). Here, our main interest is to throw some light on the structure of the controlled-diffusion process pertaining to a chain of distributed systems and, at the same time, clarifying questions concerning minimization of relative entropy subject to the initial and desired final attainable distributions for such a controlled-diffusion process.\footnote{In this paper, our intent is to provide a theoretical framework, rather than considering a specific numerical problem or application.}

In this paper, we specifically consider the following distributed system, which is formed by a chain of $n$ subsystems (where $n \ge 2$), with a random perturbation that enters only in the first subsystem and is then subsequently transmitted to other subsystems, i.e.,
\begin{eqnarray}
\left.\begin{array}{l}
d x_t^1 = m_1\bigl(t, x_t^1, \ldots,  x_t^n\bigr) dt + \sigma\bigl(t, x_t^1, \ldots, x_t^n\bigr)dW_t \\
d x_t^2 = m_2\bigl(t, x_t^1, \ldots,  x_t^n\bigr) dt  \\
d x_t^3 = m_3\bigl(t, x_t^2, \ldots,  x_t^n\bigr) dt  \\
 \quad\quad\quad~~ \vdots  \\
d x_t^n = m_n\bigl(t, x_t^{n-1}, x_t^n\bigr) dt, \quad 0 \le t \le T
\end{array}\right\},  \label{Eq1} 
\end{eqnarray}
where
\begin{itemize}
\item $x^i$ is an $\mathbb{R}^{d}$-valued state information for the $i$th subsystem, with $i \in \{1, 2, \ldots, n\}$, 
\item $m_1 \colon \mathbb{R}_{+} \times \mathbb{R}^{nd} \rightarrow \mathbb{R}^d$ and $m_{j} \colon \mathbb{R}_{+} \times \mathbb{R}^{(n-j+2)d} \rightarrow \mathbb{R}^d$, for $j = 2, \ldots, n$, are bounded continuous functions and satisfy appropriate H\"{o}lder conditions,
\item $\sigma \colon \mathbb{R}_{+} \times \mathbb{R}^{nd} \rightarrow \mathbb{R}^{d \times d}$ is a bounded continuous function, with the least eigenvalue of $\sigma\,\sigma^T$ uniformly bounded away from zero, i.e., 
\begin{align*}
 \sigma\bigl(t, x^1, \ldots, x^n\bigr)\,\sigma^T\bigl(t, x^1, \ldots, x^n\bigr) \succeq \lambda I_{d \times d}, \quad \forall (x^1, \ldots, x^n) \in \mathbb{R}^{nd}, \quad \forall t \in \mathbb{R}_{+}
\end{align*}
for some $\lambda > 0$,
\item $W_t$ (with $W_0 = 0$) is a $d$-dimensional standard Wiener process.
\end{itemize}
Note that such a chain of distributed systems has been well discussed in various applications (e.g., see \cite{BodL08}, \cite{BefA15}, \cite{DelM10} and \cite{Soi94} and the references therein). For example, when $n=2$, the equation in \eqref{Eq1} can be used to describe stochastic Hamiltonian systems (e.g., see \cite{BodL08} or \cite{Soi94} for additional discussions). 

Next, let us introduce the following notation that will be useful later. We use bold face letters to denote variables in $\mathbb{R}^{nd}$, for instance, $\mathbf{0}$ stands for a zero in $\mathbb{R}^{nd}$ (i.e., $\mathbf{0} \in \mathbb{R}^{nd}$) and, for any $t \ge 0$, the solution $\bigl(x_t^1, x_t^2, \ldots,  x_t^n\bigr)$ to \eqref{Eq1} is denoted by $\mathbf{x}_t$. Moreover, for $\bigl(t, (x^{j-1}, \ldots, x^n)\bigr) \in \mathbb{R}_{+} \times \mathbb{R}^{(n-j+2)d}$, $j = 2, \ldots, n$, the function $x^j \mapsto m_j \bigl(t, x^{j-1}, \ldots, x^n\bigr)$ is continuously differentiable with respect to $x^j$ and its derivative denoted by $\bigl(t, x^{j-1}, \ldots, x^n\bigr) \mapsto D_{x^j} m_j \bigl(t, x^{j-1}, \ldots, x^n\bigr)$.

Then, we can rewrite the stochastic differential equation (SDE) in \eqref{Eq1} as follow
\begin{align}
  d \mathbf{x}_t = \mathbf{M} (t, \mathbf{x}_t ) dt + G \sigma (t, \mathbf{x}_t ) dW_t,  \label{Eq2}
\end{align}
where $\mathbf{M} = \bigl [m_1, m_2, \ldots, m_n\bigr ]$ is an $\mathbb{R}^{nd}$-valued function and $G = \bigl[ I_d, 0, \ldots, 0 \bigr ]^T$ stands for an $(nd \times d)$ matrix that embeds $\mathbb{R}^d$ into $\mathbb{R}^{nd}$. Moreover, the infinitesimal generator associated with \eqref{Eq2} is given by\footnote{$\mathbf{x}^{j-1} \triangleq (x^{j-1}, \ldots, x^n)$ for $j = 2, \ldots n$.}
\begin{align}
 \mathcal{L}_{t,\mathbf{x}} = \dfrac{1}{2} \operatorname{tr} \bigl(a(t, \mathbf{x}) D_{x^1}^2\bigr) + m_1(t, \mathbf{x}) \cdot D_{x^1} + \sum\nolimits_{j=2}^n m_j(t, \mathbf{x}^{j-1}) \cdot  D_{x^j},  \label{Eq3}
\end{align}
where $a(t, \mathbf{x})=\sigma(t, \mathbf{x}) \sigma^T(t, \mathbf{x})$.

\begin{remark} \label{R1}
Note that, in \eqref{Eq1}, the random perturbation enters only in the first subsystem through its diffusion and is then subsequently transmitted to other subsystems through their respective drift terms. As a result, such a chain of distributed systems is described by an $\mathbb{R}^{nd}$-valued diffusion process $\mathbf{x}_t$, which is degenerate in the sense that the second-order operator associated with it is a degenerate parabolic equation. Moreover, we assume that the distributed system in \eqref{Eq1} satisfies a weak H\"{o}rmander type condition (e.g., see \cite{Hor67} or \cite[Section~3]{Ell73} for additional discussions).
\end{remark}

Throughout this paper, we assume that the following statements hold for the distributed system in \eqref{Eq2} (or \eqref{Eq1}).

\begin{assumption} \label{AS1} ~\\\vspace{-3mm}
\begin{enumerate} [(a)]
\item The functions $m_1(t, \mathbf{x})$ and $m_{j}(t, \mathbf{x}^{j-1})$ for $j = 2, \ldots, n$ satisfy appropriate H\"{o}lder conditions with respect to $\mathbf{x}$ and $\mathbf{x}^{j-1}$, respectively. Moreover, $a(t, \mathbf{x})$ is a bounded $C^{2} \bigl([0, T] \times \mathbb{R}^{nd}\bigr)$-function;  $a(t, \mathbf{x})$ and $D_{x^i}a(t, \mathbf{x})$ are bounded and satisfy appropriate H\"{o}lder conditions with respect to both $\mathbf{x}$ and $t$.
\item The infinitesimal generator $\mathcal{L}_{t,\mathbf{x}}$ is hypoelliptic (e.g., see \cite{Hor67} or \cite{Ell73}).
\end{enumerate}
\end{assumption}

\begin{remark} \label{R2}
In general, the hypoellipticity assumption is related to a strong accessibility property of controllable nonlinear systems that are driven by white noise (e.g., see \cite{SusJu72} concerning the controllability of nonlinear systems, which is closely related to \cite{StrVa72}; see also \cite[Section~3]{Ell73}). Moreover, the Jacobian matrices $D_{x^1} m_1(t, \mathbf{x})$ and $D_{x^{j-1}} m_j(t, \mathbf{x}^{j-1})$ for $j=2, \ldots, n$ are assumed to be nondegenerate, uniformly in time and space. Note that the hypoellipticity assumption also implies that the diffusion process $\mathbf{x}_t$ has a transition density with a strong Feller property.
\end{remark}

\begin{remark} \label{R3}
Here, it is worth mentioning that there are some results, based on Malliavin calculus under strong H\"{o}rmander conditions, that provide a sensitivity measure of the system with respect to noise, where a nonzero sensitivity condition (which is summarized by a nondegenerate {\it Malliavin matrix}) is used in verifying the existence of such a density function (e.g., see \cite{KusS87} and \cite{Nua95} for additional discussions).  
\end{remark}

The paper is organized as follows. In Section~\ref{S2}, we provide some preliminary results. Section~\ref{S3} formally states the stochastic control problem considered in this paper. In Section~\ref{S4}, using the logarithmic transformations approach from Fleming, we provide a sufficient condition on the existence of an optimal admissible control for such a stochastic control problem. This section also contains additional results concerning minimization of relative entropy subject to controlled and reference diffusion processes. Finally, in Section~\ref{S5}, we briefly remark on the invariance property of the path space measure for such a diffusion process pertaining to the chain of distributed systems.

\section{Preliminaries} \label{S2} 
In this section, we provide some preliminary results that will be useful later in Section~\ref{S4}. Note that, for any square integrable $\mathbb{R}^{nd}$-valued random variable $\mathbf{\xi}$ that is independent to $\bigl\{W_t; \, 0 \le t \le T \bigr\}$, the SDE in \eqref{Eq2} admits a weak solution in $[0, T)$ (i.e., in $[0, T - \varepsilon]$ for any $\varepsilon > 0$) with initial condition $\mathbf{x}_0=\mathbf{\xi}$. 

Moreover, the fundamental solution (i.e., the transition density) $q(s, \mathbf{x}, t, \mathbf{y})$ for $0 \le s < t$ and $\mathbf{x}, \mathbf{y} \in  \mathbb{R}^{nd}$ of the PDE of parabolic type
satisfies the following 
\begin{align}
 \frac{\partial}{\partial t} q(t, \mathbf{x}, T, \mathbf{y}) + \mathcal{L}_{t,\mathbf{x}} q(t, \mathbf{x}, T, \mathbf{y}) &= 0 \quad \text{in} \quad [0,T) \times \mathbb{R}^{nd} \notag \\
   \lim_{t \uparrow T} q(t, \mathbf{x}, T, \mathbf{y}) &= \delta_{\mathbf{y}}(\mathbf{x}) \quad \text{for} \quad \mathbf{x}, \mathbf{y} \in \mathbb{R}^{nd} \label{Eq4}
\end{align}
and it is twice continuously differentiable with respect to $\mathbf{x}$ and continuously differentiable with respect to $s$. Note that, for a fixed arriving point $(T, \mathbf{\zeta}) \in [0, \infty) \times \mathbb{R}^{nd}$, we can approximate the boundary condition in \eqref{Eq4} using a sequence of positive functions $(\phi_{\varepsilon})_{\varepsilon > 0}$ on $\mathbb{R}^{nd}$ that weakly converge towards the Dirac function $\delta_{\mathbf{\zeta}}$. 

To this end, we assume that $(\phi_{\varepsilon})_{\varepsilon > 0}$ (on the whole $\mathbb{R}^{nd}$) satisfies the following
\begin{align}
\exists_{\varepsilon_0 > 0} \quad \text{such that} \quad \lim_{c \rightarrow \infty}  \sup_{0 < \varepsilon < \varepsilon_0}  \sup_{\vert \mathbf{x} \vert > c} \phi_{\varepsilon}(\mathbf{x}) = 0.  \label{Eq5} 
\end{align}
Then, we can approximate the transition density function by
\begin{align}
h_{\varepsilon}(t, \mathbf{x}) = \mathbb{E}_{t, \mathbf{x}} \bigl\{\phi_{\varepsilon}(\mathbf{\zeta})\bigr\}, \quad \forall  \varepsilon > 0, \quad \forall (t, \mathbf{x}) \in [0, T-\varepsilon] \times \mathbb{R}^{nd},  \label{Eq6}
\end{align}
where such an approximation also satisfies the following Cauchy problem
\begin{align}
 \frac{\partial}{\partial t} h_{\varepsilon}(t, \mathbf{x}) + \mathcal{L}_{t,\mathbf{x}} h_{\varepsilon}(t, \mathbf{x}) = 0 \quad \text{in} \,\, [0,T- \varepsilon] \times \mathbb{R}^{nd},  \label{Eq7}
\end{align}
with boundary condition $h_{\varepsilon}(T - \varepsilon, \mathbf{x}) = \phi_{\varepsilon}(\mathbf{x})$ for $\mathbf{x} \in \mathbb{R}^{nd}$. Note that, since $q$ is continuous, we have the following
\begin{align}
\lim_{\varepsilon \rightarrow 0} h_{\varepsilon}(0, \mathbf{x}) &= \lim_{\varepsilon \rightarrow 0} \mathbb{E}_{0, \mathbf{x}} \bigl\{\phi_{\varepsilon}(\mathbf{\zeta})\bigr\} \notag \\
&= \lim_{\varepsilon \rightarrow 0} \int_{\mathbb{R}^{nd}} \phi_{\varepsilon}(\mathbf{y}) q(0, \mathbf{x}, T- \varepsilon,\mathbf{y}) d \mathbf{y} \notag \\
&= q(0, \mathbf{x}, T, \mathbf{\zeta}).  \label{Eq8}
\end{align}
If we introduce the following logarithmic transformation (e.g., see Fleming \cite{Flem78} or \cite{Flem82} for such transformations in the context of stochastic control arguments) 
\begin{align}
J_{\varepsilon}(t, \mathbf{x}) = - \log h_{\varepsilon}(t, \mathbf{x}), \quad (t, \mathbf{x}) \in [0, T-\varepsilon] \times \mathbb{R}^{nd},  \label{Eq9}
\end{align}
then it is easy to show that $J_{\varepsilon}(t, \mathbf{x})$ satisfies the following nonlinear parabolic equation
\begin{align}
 \frac{\partial}{\partial t} J_{\varepsilon}(t, \mathbf{x}) + \mathcal{L}_{t,\mathbf{x}} J_{\varepsilon}(t, \mathbf{x}) = - \frac{1}{2} a(t, \mathbf{x}) D_{x^1} J_{\varepsilon}(t, \mathbf{x}) \cdot D_{x^1} J_{\varepsilon}(t, \mathbf{x})  \label{Eq10}
\end{align}
in $[0,T-\varepsilon] \times \mathbb{R}^{nd}$ with the following boundary condition
\begin{align}
J_{\varepsilon}(T - \varepsilon, \mathbf{x}) = - \log \phi_{\varepsilon}(\mathbf{x}), \quad \mathbf{x} \in \mathbb{R}^{nd}.  \label{Eq11}
\end{align}
Furthermore, $J_{\varepsilon}(t, \mathbf{x})$ is a value function to the following stochastic control problem\footnote{$\bigl \Vert u_t \bigr \Vert_{a^{-1}}^2 \triangleq \bigl \Vert \sigma^{-1}(t, \mathbf{x}_t) u_t \bigr \Vert^2$.}
\begin{align}
\inf_{u_{\cdot}} \mathbb{E}_{t, \mathbf{x}}  \left \{ \int_{t}^{T - \varepsilon} \dfrac{1}{2}\bigl \Vert u_t \bigr \Vert_{a^{-1}}^2 dt  - \log \phi_{\varepsilon} (\mathbf{x}_{T- \varepsilon}) \right \}  \label{Eq12}
\end{align}
subject to a controlled version of \eqref{Eq2}, i.e., 
\begin{align}
d \mathbf{x}_t^{u} = \Bigl( \mathbf{M} (t, \mathbf{x}_t^{u} ) + G u_t \Bigr) dt + G \sigma (t, \mathbf{x}_t^{u}) dW_t, \quad  \mathbf{x}_0^{u} = \mathbf{x},  \label{Eq13}
\end{align}
where $(u_t)_{0 \le t \le T-\varepsilon}$ is an $\mathbb{R}^{d}$-valued progressively measurable process satisfying 
\begin{align*}
 \mathbb{E} \int_0^{T - \varepsilon} \dfrac{1}{2}\bigl \Vert u_t \bigr \Vert_{a^{-1}}^2 dt < +\infty. 
\end{align*}
Note that, for a given $(t, \mathbf{x}) \in [0, T-\varepsilon] \times \mathbb{R}^{nd}$, the infimum in \eqref{Eq12} is achieved when
\begin{align}
u^{\ast}(t, \mathbf{x}) = a(t, \mathbf{x}) D_{x^1} \log h_{\varepsilon}(t, \mathbf{x}).  \label{Eq14}
\end{align}
Later, in Sections~\ref{S3} and \ref{S4}, we consider a stochastic control problem, where the objective is to force the controlled-diffusion process $\mathbf{x}_t^{u}$ to the desired final attainable distribution starting from the initial distribution using minimum energy control.

In what follows, let $g(\mathbf{x})$ be any positive measurable function that satisfies
\begin{align}
\int_{\mathbb{R}^{nd}} q(0, \mathbf{x}, T, \mathbf{z}) g(\mathbf{z}) d \mathbf{z} < +\infty \,\,\, \text{for some} \,\,\, \mathbf{x} \in \mathbb{R}^{nd}.  \label{Eq15}
\end{align}
Then, the function
\begin{align}
h(t, \mathbf{x}) = \int_{\mathbb{R}^{nd}} q(t, \mathbf{x}, T, \mathbf{z}) g(\mathbf{z}) d \mathbf{z}  \label{Eq16}
\end{align}
belongs to $C_b^{1, 2} \bigl([0, T] \times \mathbb{R}^{nd}\bigr)$ and it is also the kernel of the operator $\bigl({\partial}/{\partial t} + \mathcal{L}_{t,\mathbf{x}}\bigr)$, i.e, ${\partial} h(t, \mathbf{x})/{\partial t} + \mathcal{L}_{t,\mathbf{x}} h(t, \mathbf{x}) = 0$ in $[0, T) \times \mathbb{R}^{nd}$.

Note that an absolutely continuous change of measure on the path-space is related to changing the original drift term of the diffusion process associated with the SDE in \eqref{Eq2} (see also \cite{Gir60}). A particular case was considered in \cite{Jam75} (cf. \cite[Theorem~2]{Jam75}) leading to the following result.

\begin{proposition} \label{P1}
Suppose that $\mathbf{x}_t$ is a weak solution of \eqref{Eq2} in $[0, T)$. Let $h(t, \mathbf{x}) \in C_b^{1, 2} \bigl([0, T] \times \mathbb{R}^{nd}\bigr)$  be a strictly positive solution to following
\begin{align}
  \frac{\partial}{\partial t} h_{\varepsilon}(t, \mathbf{x}) + \mathcal{L}_{t,\mathbf{x}} h_{\varepsilon}(t, \mathbf{x}) = 0 \quad \text{in} \quad [0, T) \times  \mathbb{R}^{nd}   \label{Eq17}
\end{align}
such that $\mathbb{E} \bigl\{ h(t, \mathbf{x}) \bigr\} < +\infty$ and $h(s, \mathbf{x}) = \mathbb{E}_{s, \mathbf{x}} \bigl\{ h(t, \mathbf{x}) \bigr\}$ for all $0 \le s < t < T$. Then, the following SDE
\begin{align}
  d \mathbf{x}_t^{h} = \Bigl( \mathbf{M} (t, \mathbf{x}_t^{h} ) + G\, a(t, \mathbf{x}_t^{h}) D_{x^1} \log h(t, \mathbf{x}_t^{h}) \Bigr) dt + G \sigma (t, \mathbf{x}_t^{h}) dW_t  \label{Eq18}
\end{align}
admits a weak solution in $[0, T)$. Moreover, if there exists a positive measurable function $g(\mathbf{x})$ such that
\begin{align}
h(s, \mathbf{x}) = \mathbb{E}_{s, \mathbf{x}} \bigl\{ g(\mathbf{x}_T) \bigr\},  \label{Eq19}
\end{align}
then the transition density corresponding to \eqref{Eq18} is given by
\begin{align}
q^h(s, \mathbf{x}, t, \mathbf{y}) = q(s, \mathbf{x}, t, \mathbf{y})\dfrac{h(t, \mathbf{y})} {h(s, \mathbf{x})}.  \label{Eq20}
\end{align}
\end{proposition}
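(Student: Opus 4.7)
The approach is a Doob $h$-transform: I would build a positive martingale from $h(t,\mathbf{x}_t)$, use it to change the reference measure via Girsanov, and read off both the transformed SDE and the new transition density at the same time. First, applying Itô's formula to $h(t,\mathbf{x}_t)$ along the weak solution of \eqref{Eq2}, and noting that the Wiener increment enters only through the first block of coordinates (via the embedding $G$), the second-order term reduces to the $D_{x^1}^{2}$ contribution already inside $\mathcal{L}_{t,\mathbf{x}}$; combined with \eqref{Eq17} this leaves
\begin{align*}
dh(t,\mathbf{x}_t) = D_{x^1}h(t,\mathbf{x}_t)\cdot\sigma(t,\mathbf{x}_t)\,dW_t.
\end{align*}
Thus $M_t \triangleq h(t,\mathbf{x}_t)/h(0,\mathbf{x}_0)$ is a strictly positive local martingale on $[0,T)$, and the standing hypothesis $h(s,\mathbf{x}) = \mathbb{E}_{s,\mathbf{x}}\{h(t,\mathbf{x})\}$ for $s<t<T$ upgrades it to a genuine mean-one $\mathbb{P}$-martingale; under the additional representation \eqref{Eq19} this is just the tower property applied to $g(\mathbf{x}_T)$.

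Second, for each $t<T$ I would define $\mathbb{Q}$ on $\mathcal{F}_t$ by $d\mathbb{Q}/d\mathbb{P}\vert_{\mathcal{F}_t} = M_t$. Girsanov's theorem then shows that
\begin{align*}
\widetilde{W}_r \triangleq W_r - \int_0^r \sigma^T(s,\mathbf{x}_s)\,D_{x^1}\log h(s,\mathbf{x}_s)\,ds
\end{align*}
is a standard Wiener process under $\mathbb{Q}$. Substituting $dW_r = d\widetilde{W}_r + \sigma^T D_{x^1}\log h\,dr$ back into \eqref{Eq2} converts the drift into $\mathbf{M}(r,\mathbf{x}_r) + G\,a(r,\mathbf{x}_r)D_{x^1}\log h(r,\mathbf{x}_r)$, which is exactly \eqref{Eq18}. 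Hence $(\mathbf{x}_r,\widetilde{W}_r)_{r<T}$ on $(\Omega,\mathcal{F},\mathbb{Q})$ realizes the required weak solution of \eqref{Eq18}.

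For the transition-density identity I would test \eqref{Eq20} against an arbitrary bounded measurable $f$: by the change-of-measure formula and the Markov property for \eqref{Eq2},
\begin{align*}
\mathbb{E}_{\mathbb{Q}}^{s,\mathbf{x}}\bigl\{f(\mathbf{x}_t^h)\bigr\} = \frac{1}{h(s,\mathbf{x})}\,\mathbb{E}^{s,\mathbf{x}}\bigl\{f(\mathbf{x}_t)\,h(t,\mathbf{x}_t)\bigr\} = \int_{\mathbb{R}^{nd}} f(\mathbf{y})\,q(s,\mathbf{x},t,\mathbf{y})\,\frac{h(t,\mathbf{y})}{h(s,\mathbf{x})}\,d\mathbf{y},
\end{align*}
which reads off \eqref{Eq20} at once. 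As a sanity check, one may verify directly that the right-hand side satisfies the forward Kolmogorov equation associated with the perturbed generator $\mathcal{L}_{t,\mathbf{x}} + a(t,\mathbf{x})D_{x^1}\log h(t,\mathbf{x})\cdot D_{x^1}$.

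The main technical obstacle I anticipate is upgrading $M_t$ from a local to a true martingale on the half-open interval $[0,T)$. A direct Novikov-type bound on $\sigma^T D_{x^1}\log h$ is not available in general, because the hypoellipticity of $\mathcal{L}_{t,\mathbf{x}}$ does not yield a uniform positive lower bound on $h$ and its logarithmic derivative may blow up as $t\uparrow T$. The workaround, already built into the hypotheses, is to invoke \eqref{Eq19} directly: then $M_t = \mathbb{E}\{g(\mathbf{x}_T)\mid\mathcal{F}_t\}/h(0,\mathbf{x}_0)$ is a martingale by construction on every $\mathcal{F}_t$ with $t<T$, which is exactly what Girsanov—and hence both conclusions of the proposition—require.
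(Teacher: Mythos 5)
Your proposal is correct and follows essentially the same route as the paper: both arguments perform the Doob $h$-transform by changing measure with the martingale $h(t,\mathbf{x}_t)/h(0,\mathbf{x}_0)$ and then read off the density \eqref{Eq20} by testing against bounded $f$ exactly as in \eqref{Eq27}. The only (immaterial) difference is that you identify the new drift by invoking Girsanov's theorem to exhibit the shifted Wiener process $\widetilde{W}$, whereas the paper verifies that the law under $Q$ solves the martingale problem for the perturbed generator $\mathcal{L}^{h}_{t,\mathbf{x}}$ by applying It\^{o}'s rule to $h(t,\mathbf{x}_t)f(\mathbf{x}_t)$.
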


\begin{proof}
Let $(\Omega, P, \mathscr{F})$ be the probability space in which the weak solution $\mathbf{x}$ (i.e., a continuous process) of \eqref{Eq2}, with initial condition $\mathbf{x}_0$, is defined on. Then, let us introduce the following nonnegative martingale process 
\begin{align}
z_{t} = \dfrac{h(t, \mathbf{x}_t)}{h(0, \mathbf{x}_0)}  \label{Eq21}
\end{align}
with respect to the natural filtration $\mathscr{F}_t = \sigma\bigl\{\mathbf{x}_s \,\vert \, 0 \le s \le t \bigr\}$. 

Note that $\mathbb{E} \bigl\{ z_{t} \bigr\}=1$, then we can introduce the following change of probability measures on $\Omega$
\begin{align}
\dfrac{dQ}{dP} = z_{T-\varepsilon}  \label{Eq22}
\end{align}
for any fixed $\varepsilon > 0$.

Let $f \in C_0^{\infty}(\mathbb{R}^{nd})$ and $0 \le s \le t \le T-\varepsilon$, we have
\begin{align}
&\mathbb{E}_{s,\mathbf{x}}^{Q} \bigl\{f(\mathbf{x}_t)\bigr\} - f(\mathbf{x}_s) \notag \\
& = \dfrac{1}{h(s, \mathbf{x}_s)} \mathbb{E}_{s,\mathbf{x}} \Bigl \{ h(t, \mathbf{x}_t)f(\mathbf{x}_t) - h(s, \mathbf{x}_s)f(\mathbf{x}_s) \Bigr \} \notag \\
& = \dfrac{1}{h(s, \mathbf{x}_s)} \mathbb{E}_{s,\mathbf{x}} \Bigl \{ \int_s^t \Bigr [h(\tau, \mathbf{x}_{\tau})\mathcal{L}_{\tau,\mathbf{x}} f(\mathbf{x}_{\tau}) + a(\tau, \mathbf{x}_{\tau}) D_{x^1} \log h(\tau, \mathbf{x}_{\tau}) \cdot D_{x^1}f(\mathbf{x}_{\tau}) \Bigr]d\tau \Bigr \} \notag \\
& = \dfrac{1}{h(s, \mathbf{x}_s)} \mathbb{E}_{s,\mathbf{x}} \Bigl \{ \int_s^t h(\tau, \mathbf{x}_{\tau}) \Bigr [m_1(\tau, \mathbf{x}_{\tau}) \cdot D_{x^1} f(\mathbf{x}_{\tau}) + \sum\nolimits_{j=2}^n m_j(\tau, \mathbf{x}_{\tau}^{j-1}) \cdot D_{x^j} f(\mathbf{x}_{\tau}) \notag \\
&  \quad\quad\quad + \dfrac{1}{2} \operatorname{tr} \bigl(a(\tau, \mathbf{x}_{\tau}) D_{x^1}^2 f(\mathbf{x}_{\tau})\bigr) + a(\tau, \mathbf{x}_{\tau}) D_{x^1} \log h(\tau, \mathbf{x}_{\tau}) \cdot D_{x^1}f(\mathbf{x}_{\tau})\Bigr]d\tau \Bigr \} \notag \\
& = \dfrac{1}{h(s, \mathbf{x}_s)} \mathbb{E}_{s,\mathbf{x}} \Bigl \{ h(t, \mathbf{x}_{t}) \int_s^t \Bigr [m_1(\tau, \mathbf{x}_{\tau}) \cdot D_{x^1} f(\mathbf{x}_{\tau}) + \sum\nolimits_{j=2}^n m_j(\tau, \mathbf{x}_{\tau}^{j-1}) \cdot D_{x^j} f(\mathbf{x}_{\tau}) \notag \\
&  \quad\quad\quad + \dfrac{1}{2} \operatorname{tr} \bigl(a(\tau, \mathbf{x}_{\tau}) D_{x^1}^2 f(\mathbf{x}_{\tau})\bigr) + a(\tau, \mathbf{x}_{\tau}) D_{x^1} \log h(\tau, \mathbf{x}_{\tau}) \cdot D_{x^1}f(\mathbf{x}_{\tau})\Bigr] d\tau \Bigr \} \notag \\
& =  \mathbb{E}_{s,\mathbf{x}}^{Q}  \Bigl \{ \int_s^t \Bigr [m_1(\tau, \mathbf{x}_{\tau}) \cdot D_{x^1} f(\mathbf{x}_{\tau}) + \sum\nolimits_{j=2}^n m_j(\tau, \mathbf{x}_{\tau}^{j-1}) \cdot D_{x^j} f(\mathbf{x}_{\tau}) \notag \\
&  \quad\quad\quad + \dfrac{1}{2} \operatorname{tr} \bigl(a(\tau, \mathbf{x}_{\tau}) D_{x^1}^2 f(\mathbf{x}_{\tau})\bigr) + a(\tau, \mathbf{x}_{\tau}) D_{x^1} \log h(\tau, \mathbf{x}_{\tau}) \cdot D_{x^1}f(\mathbf{x}_{\tau})\Bigr] d\tau \Bigr \},    \label{Eq23}
\end{align}
where we have employed the It\^{o}'s rule for $h(t, \mathbf{x}_t)f(\mathbf{x}_t)$. This means that the law of $\mathbf{x}_{\cdot}$, as a process defined in $(\Omega, Q, \mathscr{F})$, solves the martingale problem for
\begin{align}
 \mathcal{L}_{t,\mathbf{x}}^{h} = m_1(t, \mathbf{x}_t) \cdot D_{x^1} + \sum\nolimits_{j=2}^n m_j(t, \mathbf{x}_t^{j-1}) \cdot D_{x^j} +  \dfrac{1}{2} \operatorname{tr} \bigl(a(t, \mathbf{x}_{\tau}) D_{x^1}^2\bigr) \notag \\
 + a(t, \mathbf{x}_t) D_{x^1} \log h(t, \mathbf{x}_t) \cdot D_{x^1}   \label{Eq24}
\end{align}
in $[0, T-\varepsilon]$. This is equivalent to saying that \eqref{Eq18} has a weak solution $\mathbf{x}_t^{h}$ in $[0, T-\varepsilon]$.

Note that $f$ has compact support and if $h(t, \mathbf{x}_t)=\mathbb{E}_{s,\mathbf{x}}\bigl\{g(\mathbf{x}_T)\bigr\}$. Then, we can define $dQ/dP =  g(\mathbf{x}_T)$. As a result, we obtain the following
\begin{align}
\mathbb{E}_{s,\mathbf{x}}^{Q} \bigl\{f(\mathbf{x}_t)\bigr\} -& f(\mathbf{x}_s) = \mathbb{E}_{s,\mathbf{x}}^{Q}  \Bigl \{ \int_s^t \Bigr [m_1(\tau, \mathbf{x}_{\tau}) \cdot D_{x^1} f(\mathbf{x}_{\tau})  \notag \\
&  + \sum\nolimits_{j=2}^n m_j(\tau, \mathbf{x}_{\tau}^{j-1}) \cdot D_{x^j} f(\mathbf{x}_{\tau}) +  \dfrac{1}{2} \operatorname{tr} \bigl(a(\tau, \mathbf{x}_{\tau}) D_{x^1}^2 f(\mathbf{x}_{\tau})\bigr) \notag \\
& \quad \quad + a(\tau, \mathbf{x}_{\tau}) D_{x^1} \log h(\tau, \mathbf{x}_{\tau}) \cdot D_{x^1}f(\mathbf{x}_{\tau})\Bigr] d\tau \Bigr \},   \label{Eq25}
\end{align}
since $\mathbf{x}_t \rightarrow \mathbf{x}_T$ a.e., we can let $t \rightarrow T$ and conclude by the Lebesque's dominance convergence theorem (see \cite[Chapter~4]{Roy88}).

In order to show \eqref{Eq20}, we only need to check the following condition
\begin{align}
\mathbb{E}_{s,\mathbf{x}}^{Q} \bigl\{f(\mathbf{x}_t)\bigr\} =  \int q^h(s, \mathbf{x}, t, \mathbf{y}) f(\mathbf{y}) d \mathbf{y}  \label{Eq26}
\end{align}
for any $f \in C_0^{\infty}(\mathbb{R}^{nd})$. That is,
\begin{align}
\mathbb{E}_{s,\mathbf{x}}^{Q} \bigl\{f(\mathbf{x}_t)\bigr\} &= \dfrac{1}{h(s, \mathbf{x}_s)} \mathbb{E}_{s,\mathbf{x}} \bigl\{ h(t, \mathbf{x}_t) f(\mathbf{x}_t)\bigr\} \notag \\
                                                                                           &= \dfrac{1}{h(s, \mathbf{x}_s)} \int q(s, \mathbf{x}, t, \mathbf{y}) h(t, \mathbf{y}) f(\mathbf{y}) d \mathbf{y} \notag \\
                                                                                           &= \int q^{h}(s, \mathbf{x}, t, \mathbf{y}) f(\mathbf{y}) d \mathbf{y}. \label{Eq27}
\end{align}
This completes the proof of Proposition~\ref{P1}.
\end{proof}

Next, we state the following proposition (without proof) which is a version of the result given in \cite{Jam74} (cf. Beurling \cite{Beu60}). Later, we use this proposition for proving Propositions~\ref{P4} and \ref{P5} in Section~\ref{S4}.

\begin{proposition} (cf. \cite[Theorems~2.1 and 3.1]{Jam74}) \label{P2}
Let $\mu_0$ and $\mu_T$ be two probability measures on $\mathbb{R}^{nd}$. Suppose that $q(s, \mathbf{x}, t, \mathbf{y})$, for $0 \le s < t \le T$ and $\mathbf{x}, \mathbf{y} \in \mathbb{R}^{nd}$, is a transition density. Then, there exists a unique pair of $\sigma$-finite measures $(\nu_0, \nu_T)$ on $\mathbb{R}^{nd}$ such that the measure $\mu$ on $\mathbb{R}^{nd} \times \mathbb{R}^{nd}$, which is defined by
\begin{align}
\mu(E) = \int_{E} q(0, \mathbf{x}, T, \mathbf{y}) \nu_0(d\mathbf{x})\nu_T(d\mathbf{y}),  \label{Eq28}
\end{align}
has marginals $\mu_0$ and $\mu_T$ (where $E$ is an arbitrary $\mathbb{R}^{nd} \times \mathbb{R}^{nd}$-Borel set). Furthermore, $\nu_0 \ll \mu_0$ and $\nu_T \ll \mu_T$ are mutually absolutely continuous measures.
\end{proposition}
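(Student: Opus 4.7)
My plan is to recast the statement as a coupled system of integral equations and build $(\nu_0,\nu_T)$ by a Schrödinger-type alternating iteration. The marginal constraints $\mu_0(A)=\mu(A\times\mathbb{R}^{nd})$ and $\mu_T(B)=\mu(\mathbb{R}^{nd}\times B)$ applied to \eqref{Eq28} give
\begin{align*}
\mu_0(d\mathbf{x}) &= \Bigl(\int_{\mathbb{R}^{nd}} q(0,\mathbf{x},T,\mathbf{y})\,\nu_T(d\mathbf{y})\Bigr)\nu_0(d\mathbf{x}),\\
\mu_T(d\mathbf{y}) &= \Bigl(\int_{\mathbb{R}^{nd}} q(0,\mathbf{x},T,\mathbf{y})\,\nu_0(d\mathbf{x})\Bigr)\nu_T(d\mathbf{y}).
\end{align*}
By Assumption~\ref{AS1}(b) and Remark~\ref{R2}, $q(0,\mathbf{x},T,\mathbf{y})$ is strictly positive, so once $(\nu_0,\nu_T)$ exists the two bracketed integrals $\phi(\mathbf{x})$ and $\psi(\mathbf{y})$ are strictly positive and finite almost everywhere, forcing the densities $d\nu_0/d\mu_0=1/\phi$ and $d\nu_T/d\mu_T=1/\psi$ to be strictly positive and finite. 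This automatically yields $\nu_0 \ll \mu_0 \ll \nu_0$ and $\nu_T \ll \mu_T \ll \nu_T$, which is precisely the mutual absolute continuity claim.

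\textbf{Existence.} I would construct the pair by an alternating normalization (the Beurling--Jamison iteration). Seed with $\nu_T^{(0)}=\mu_T$ and, for $k\ge 0$, define successively $\phi^{(k)}(\mathbf{x})=\int q(0,\mathbf{x},T,\mathbf{y})\,\nu_T^{(k)}(d\mathbf{y})$, $\nu_0^{(k+1)}=\mu_0/\phi^{(k)}$, then $\psi^{(k)}(\mathbf{y})=\int q(0,\mathbf{x},T,\mathbf{y})\,\nu_0^{(k+1)}(d\mathbf{x})$, and $\nu_T^{(k+1)}=\mu_T/\psi^{(k)}$. The natural tool for proving convergence is Hilbert's projective metric on the cone of positive measures: the operator $\nu_T \mapsto \mu_T/\int q(0,\mathbf{x},T,\cdot)(\mu_0/\int q(0,\mathbf{x},T,\mathbf{y})\nu_T(d\mathbf{y}))\,d\mathbf{x}$ is a Birkhoff contraction whenever the relevant projective diameter of $q$ is finite on the integration domain, and the fixed point provides the required $(\nu_0,\nu_T)$; monotone/dominated convergence transfers the limit from densities to the underlying $\sigma$-finite measures.

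\textbf{Uniqueness.} Suppose $(\nu_0,\nu_T)$ and $(\tilde\nu_0,\tilde\nu_T)$ both satisfy the marginal identities. By the absolute continuity established above, both pairs are mutually absolutely continuous, so the Radon--Nikodym derivatives $f=d\nu_0/d\tilde\nu_0$ and $g=d\tilde\nu_T/d\nu_T$ are well defined and strictly positive. Substituting into the two marginal equations and using the strict positivity of $q$ yields, a.e., the pair of identities $\int q(0,\mathbf{x},T,\mathbf{y})g(\mathbf{y})\,\nu_T(d\mathbf{y})=f(\mathbf{x})^{-1}\int q(0,\mathbf{x},T,\mathbf{y})\,\nu_T(d\mathbf{y})$ and its symmetric counterpart in $\mathbf{y}$; applying each identity inside the other and using the strict positivity of $q$ again forces $f\cdot g \equiv c$ for some constant $c>0$, and the normalization inherited from the two probability marginals $\mu_0,\mu_T$ pins down $c=1$ and hence $f\equiv 1$, $g\equiv 1$ a.e.

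\textbf{Main obstacle.} The delicate point is the non-compactness of $\mathbb{R}^{nd}$ combined with the $\sigma$-finiteness (rather than finiteness) of $(\nu_0,\nu_T)$: Hilbert-metric contraction is not automatic on unbounded domains, and a single iteration step can produce densities that blow up at infinity. I would handle this by an exhaustion argument, solving the problem first on compact sets $K_n\uparrow\mathbb{R}^{nd}$ (where the strictly positive continuous kernel $q$ has bounded projective diameter and the classical Beurling--Jamison theorem applies directly), and then passing to the limit by a tightness argument grounded in the finiteness of $\mu_0$ and $\mu_T$ together with Fatou's lemma. This compact-exhaustion plus limit-passage step is where I expect essentially all of the technical weight of the proof to sit.
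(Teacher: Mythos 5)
First, note that the paper does not prove Proposition~\ref{P2} at all: it is stated explicitly ``without proof'' and imported from \cite[Theorems~2.1 and 3.1]{Jam74}, whose existence part rests in turn on Beurling's fixed-point/variational theorem in \cite{Beu60}. So your proposal is not competing with an argument in the text; it is competing with Beurling--Jamison, and your chosen route (the Fortet/Sinkhorn alternating normalization, contracted in Hilbert's projective metric) is a genuinely different one. Your derivation of the two marginal identities and of the mutual absolute continuity $\nu_0\sim\mu_0$, $\nu_T\sim\mu_T$ (granted existence) is fine.

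As a proof, however, there are two concrete gaps. (1) \emph{Existence.} The Birkhoff contraction theorem requires the kernel to have finite projective diameter, i.e. $\sup\, q(0,\mathbf{x},T,\mathbf{y})\,q(0,\mathbf{x}',T,\mathbf{y}')/\bigl(q(0,\mathbf{x},T,\mathbf{y}')\,q(0,\mathbf{x}',T,\mathbf{y})\bigr)<\infty$, and this fails for any Gaussian-type transition density on all of $\mathbb{R}^{nd}$. You concede this and propose a compact exhaustion $K_n\uparrow\mathbb{R}^{nd}$ followed by a tightness argument, but that step is exactly where the theorem lives and it is not carried out: you give no uniform control on the truncated potentials $\phi^{(k)},\psi^{(k)}$ that would let Fatou or monotone convergence identify the limit as a solution of the full system rather than a pair of measures with defective marginals. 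Until that is supplied, existence is asserted, not proved. (2) \emph{Uniqueness.} Your final step, that ``the normalization inherited from $\mu_0,\mu_T$ pins down $c=1$,'' cannot be right: the representation \eqref{Eq28} is invariant under the reciprocal rescaling $(\nu_0,\nu_T)\mapsto(c\,\nu_0,\,c^{-1}\nu_T)$, so the two probability marginals do not distinguish $c$ from $1$; the pair is unique only up to this scaling (which is how Jamison's statement must be read). Moreover, the passage from the two integral identities to $f\cdot g\equiv c$ by ``applying each identity inside the other'' is only sketched --- that implication is the real content of the uniqueness proof in \cite{Jam74} and needs an argument, not merely an appeal to strict positivity of $q$.
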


Recall the following definition that will be useful later.
\begin{definition}
Assume that $\nu_2$ and $\nu_1$ are $\sigma$-finite measures defined in the same measure space. Then, the {\it relative entropy} of $\nu_2$ with respect to $\nu_1$ is defined by 
\begin{eqnarray}
 H(\nu_2 \vert \nu_1) = \left \{\begin{array}{l}
  \int \log \Bigl(\dfrac{d \nu_2}{d \nu_1} \Bigr) d \nu_2, \quad \quad \text{if} \quad  \nu_2 \ll \nu_1, \\
  +\infty \quad \quad \quad \quad\quad\quad \quad \text{otherwise}.
\end{array}\right.  \label{Eq29} 
\end{eqnarray}
\end{definition}

\section{Statement of the problem} \label{S3}
Let us consider the following controlled-diffusion process
\begin{align}
d \mathbf{x}_t^{u} = \Bigl( \mathbf{M} (t, \mathbf{x}_t^{u} ) + G u_t \Bigr) dt + G \sigma (t, \mathbf{x}_t^{u}) dW_t, \label{Eq30}
\end{align}
where $u_t$ is an admissible control that satisfies
\begin{enumerate} [(i)]
\item $u_t$ is an $\mathbb{R}^{d}$-valued process with measurable sample paths satisfying nonanticipatory condition, i.e., $(W_t -W_s)$ is independent of $u_r$, for $r \le s \le t$;
\item \eqref{Eq30} admits a weak solution in [0, T]; and
\item $\mathbb{E} \int_0^{T} \bigl \Vert u_t \bigr \Vert_{a^{-1}}^2 dt < +\infty$.
\end{enumerate}

Assume that we are given two probability measures $\mu_0$ and $\mu_T$, then we specifically consider the following problem (which was originally formulated by Schr\"{o}dinger in \cite{Sch31}, albeit in a slightly different context).

\begin{problem}\label{Pb1}  Find an optimal admissible control $u_t^{\ast}$ such that
\begin{enumerate} [(a)]
\item $\mathbf{x}_0^{u^{\ast}}$ and $\mathbf{x}_T^{u^{\ast}}$ are distributed according to $\mu_0$ and $\mu_T$, respectively; and 
\item $u_t^{\ast}$ (among all admissible controls satisfying ${\rm (i)}$-${\rm (iii)}$) minimizes the following cost functional
\begin{align}
J(u_t) = \mathbb{E} \int_0^{T} \dfrac{1}{2}\bigl \Vert u_t \bigr \Vert_{a^{-1}}^2 dt. \label{Eq31}
\end{align}
\end{enumerate}
\end{problem} 

In the following section (cf. Propositions~\ref{P4} and \ref{P5}), we provide a sufficient condition on the existence for such an optimal admissible control (i.e., a minimum energy control) for the above problem.

\section{Main results} \label{S4}
In this section, we present our main results -- where we first characterize the set of attainable distributions with respect to the class of admissible controls mention above in Section~\ref{S3}. Then, we provide a condition on the existence of an optimal admissible control forcing the controlled-diffusion process to the desired final attainable distribution starting from the initial distribution and, at the same time, we make connections to the problem of minimizing relative entropy subject to these end-point distribution constraints.\footnote{Such an optimal admissible control has been studied using the stochastic control arguments (i.e., the logarithmic transformations approach) from Fleming (e.g., see Fleming \cite{Flem78}; cf. Section~\ref{S2}).}

\subsection{The set of attainable distributions} \label{S4.1} 
Here, we provide a result that characterizes the set of attainable distributions for the controlled-diffusion process $\mathbf{x}_t^{u}$ in \eqref{Eq30} with respect to the above class of admissible controls. 

In what follows, we assume that $\mathbf{x}_t$ is a weak solution in $[0, T)$ to the SDE in \eqref{Eq2}, i.e., 
\begin{align*}
d \mathbf{x}_t &= \mathbf{M} (t, \mathbf{x}_t) dt + G \sigma (t, \mathbf{x}_t) dW_t, \quad \mathbf{x}_0 = \mathbf{\xi}, 
\end{align*}
where the initial point $\mathbf{x}_0 = \mathbf{\xi}$ is distributed according to $\mu_0$ and satisfies $\mathbb{E} \vert \mathbf{\xi} \vert^2 < +\infty$.

Recall that, the SDE in \eqref{Eq18}, with $h(t, \mathbf{x}) \in C_b^{1, 2} \bigl([0, T] \times \mathbb{R}^{nd}\bigr)$ satisfying \, ${\partial} h_{\varepsilon}(t, \mathbf{x})/{\partial t} + \mathcal{L}_{t,\mathbf{x}} h_{\varepsilon}(t, \mathbf{x}) = 0$ in $[0, T) \times \mathbb{R}^{nd}$, admits a weak solution $\mathbf{x}_t^{h}$ in $[0, T)$. Further, let $\mathcal{T}_{s,t}$, $0 \le s \le t \le T$, denote the transition semigroup for $\mathbf{x}_{\cdot}^{h}$. Note that the extended infinitesimal generator associated with $\mathbf{x}_t^{h}$ is given by
\begin{align}
 \mathcal{L}_{t,\mathbf{x}}^{h} = \mathcal{L}_{t,\mathbf{x}} + a(t, \mathbf{x}) D_{x^1} \log h(t, \mathbf{x}) \cdot D_{x^1},  \label{Eq32}
\end{align}
where $\mathcal{L}_{t,\mathbf{x}}$ is the infinitesimal generator associated with $\mathbf{x}_t$ (cf. equations~\eqref{Eq3} and \eqref{Eq24}).

Next, we assume that $f \colon \mathbb{R}^{nd} \rightarrow \mathbb{R}$ has continuous partial derivatives up to the second-order which, along with $f$, vanish at infinity and satisfies
\begin{align}
 \frac{\partial}{\partial s} \mathcal{T}_{s,t} f(\cdot) + \mathcal{L}_{s,\mathbf{x}}^{h} \mathcal{T}_{s,t} f(\cdot) = 0, \quad 0 \le s \le t \le T.  \label{Eq33}
\end{align}

Then, we have the following result that characterizes the set of attainable distributions for the controlled-diffusion process associated with \eqref{Eq30}.

\begin{proposition} \label{P3}
Given any admissible control $(u_{t})_{0 \le t \le T-\varepsilon}$, then the attainable distributions associated with $\mathbf{x}_t^{u}$ (cf. equation~\eqref{Eq30}) and that of $\mathbf{x}_t^{h}$ (cf. equation~\eqref{Eq18}), for each $t \in [0,T]$, are identical.
\end{proposition}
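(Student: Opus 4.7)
The plan is to prove equality of the two families of one-dimensional marginal laws by establishing both inclusions, exploiting the special role played by the control (which perturbs only the drift of the first subsystem, exactly where the noise enters). The easy inclusion, that every marginal law of some $\mathbf{x}_t^h$ is also a marginal law of $\mathbf{x}_t^u$ for some admissible $u$, is immediate: take $u_t := a(t, \mathbf{x}_t^h) D_{x^1}\log h(t, \mathbf{x}_t^h)$; the nonanticipatory property (i) is inherited from the adapted process $\mathbf{x}_t^h$, the existence of a weak solution (ii) is precisely the content of Proposition~\ref{P1}, and the quadratic-integrability (iii) follows from the $C_b^{1,2}$ regularity of $h$ together with the uniform ellipticity of $a$ in the first block coordinate.

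For the nontrivial inclusion, I would start from an arbitrary admissible control $u$ and introduce the Markovian projection $v(t, \mathbf{x}) := \mathbb{E}\bigl[u_t \,\big\vert\, \mathbf{x}_t^u = \mathbf{x}\bigr]$. Using Assumption~\ref{AS1} and the hypoellipticity (so that $\mathbf{x}_t^u$ admits a smooth marginal density obtained from the Fokker--Planck equation adjoint to $\mathcal{L}_{t,\mathbf{x}} + u\cdot D_{x^1}$), a Gy\"ongy-type mimicking argument shows that the Markov diffusion driven by the feedback $v(t, \cdot)$ has the same one-dimensional marginals as $\mathbf{x}_t^u$; this is verified by testing against $f \in C_0^{\infty}(\mathbb{R}^{nd})$ and comparing the two weak formulations, noting that only the term $\mathbb{E}\bigl\{u_t \cdot D_{x^1} f(\mathbf{x}_t^u)\bigr\}$ is affected, and it equals $\mathbb{E}\bigl\{v(t, \mathbf{x}_t^u)\cdot D_{x^1}f(\mathbf{x}_t^u)\bigr\}$ by conditioning.

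It then remains to exhibit an $h$ satisfying the hypotheses of Proposition~\ref{P1} whose $h$-transformed drift reproduces $v$, i.e. $a(t, \mathbf{x}) D_{x^1}\log h(t, \mathbf{x}) = v(t, \mathbf{x})$. I would do this by prescribing a positive terminal weight $g$ matching the terminal marginal of $\mathbf{x}_T^u$ (relative to the uncontrolled transition density $q(0, \mathbf{x}, T, \cdot)$ as in \eqref{Eq15}--\eqref{Eq16}), setting $h(t, \mathbf{x}) = \mathbb{E}_{t,\mathbf{x}}\bigl\{g(\mathbf{x}_T)\bigr\}$, and invoking the heat-kernel regularity granted by Assumption~\ref{AS1}(b) to place $h$ in $C_b^{1,2}([0,T]\times\mathbb{R}^{nd})$; if $g$ is only measurable, use the mollifier family $(\phi_\varepsilon)_{\varepsilon>0}$ of Section~\ref{S2} and pass to the limit as in \eqref{Eq6}--\eqref{Eq8}. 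Proposition~\ref{P1} then delivers $\mathbf{x}_t^h$ with the same marginals as $\mathbf{x}_t^v$, hence the same marginals as $\mathbf{x}_t^u$. The chief obstacle is the degeneracy of $G\sigma$: only the first block receives noise, so the Fokker--Planck operator is degenerate parabolic, and both the existence of a smooth conditional expectation defining $v$ and the regularity of $h$ rest crucially on the weak H\"ormander condition; a second, more technical point is ensuring the compatibility $D_{x^1}\log h = a^{-1} v$ without over-determining $h$, which is automatic here because $h$ is constructed directly from its terminal data via the backward Cauchy problem rather than by integrating $v$ in the $x^1$ variable.
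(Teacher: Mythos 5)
Your architecture (two inclusions between the set of marginal laws attainable by admissible controls and the set attainable by $h$-transforms) is not the route the paper takes: the paper fixes a test function $f$ with $\frac{\partial}{\partial s}\mathcal{T}_{s,t}f + \mathcal{L}_{s,\mathbf{x}}^{h}\mathcal{T}_{s,t}f = 0$ (equation \eqref{Eq33}), applies It\^{o}'s formula to $s \mapsto \mathcal{T}_{s,t}f(\mathbf{x}_s^{u})$, and reduces $\mathbb{E}\{f(\mathbf{x}_t^{u})\} - \mathbb{E}\{f(\mathbf{x}_t^{h})\}$ to the expectation of $\int_0^t \bigl(a(s,\mathbf{x}_s^{u})u_s - a(s,\mathbf{x}_s^{u})D_{x^1}\log h(s,\mathbf{x}_s^{u})\bigr)\cdot D_{x^1}\mathcal{T}_{s,t}f(\mathbf{x}_s^{u})\,ds$ in a single computation, with no mimicking theorem and no construction of $h$ from $u$. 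Your first (easy) inclusion is essentially fine, modulo the integrability of $a D_{x^1}\log h$ (strict positivity of $h$ on the noncompact space $\mathbb{R}^{nd}$ does not by itself bound $D_{x^1}h/h$), and the Gy\"{o}ngy-type Markovian projection is a legitimate way to reduce to feedback controls.

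The genuine gap is in the last step of the hard inclusion. Having produced the feedback $v(t,\mathbf{x}) = \mathbb{E}\bigl[u_t \mid \mathbf{x}_t^{u} = \mathbf{x}\bigr]$, you need an $h$ that is space-time harmonic for $\mathcal{L}_{t,\mathbf{x}}$ (i.e.\ satisfies the backward Cauchy problem in Proposition~\ref{P1}) \emph{and} satisfies $a(t,\mathbf{x})D_{x^1}\log h(t,\mathbf{x}) = v(t,\mathbf{x})$ for all $t\in[0,T]$. This is an overdetermined system, and the compatibility is not ``automatic'': choosing the terminal weight $g$ so that the $h$-process reproduces the law of $\mathbf{x}_T^{u}$ (via \eqref{Eq20}) pins down only the time-$T$ marginal, while the intermediate marginals of $\mathbf{x}_t^{h}$ are then dictated by the $h$-transform dynamics and will in general differ from those of $\mathbf{x}_t^{v}$. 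Concretely, take $u_t = c(t)$ deterministic, so $v(t,\mathbf{x}) = c(t)$; then $D_{x^1}\log h = a^{-1}(t,\mathbf{x})c(t)$ together with $\partial_t h + \mathcal{L}_{t,\mathbf{x}}h = 0$ generically has no solution, yet $u$ is admissible. To repair this you would either have to weaken the claim to a fixed-time statement (for each single $t$, match the time-$t$ law using the Schr\"{o}dinger system of Proposition~\ref{P2}, which is what Propositions~\ref{P4} and \ref{P5} effectively do at $t=T$), or argue the equality of the whole marginal flow by a different mechanism, as the paper attempts with its semigroup/It\^{o} computation.
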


\begin{proof}
Suppose that $f$ satisfies \eqref{Eq33}. If we applying the It\^{o}'s formula to $\mathcal{T}_{s,t} f(\mathbf{x}_t^{u})$ for $ 0 \le s \le t \le T$, then we obtain
\begin{align}
f(\mathbf{x}_t^{u}) - \mathcal{T}_{0,t} f(\mathbf{x}_0^{u}) &= \int_0^t \biggl \{ \frac{\partial}{\partial s} \mathcal{T}_{s,t} f(\mathbf{x}_s^{u}) + m_1(s, \mathbf{x}_s^{u}) \cdot D_{x^1} f(\mathbf{x}_s^{u}) \notag \\
& \quad + \sum\nolimits_{j=2}^n m_j(s, {\mathbf{x}_s^{u,}}^{j-1}) \cdot D_{x^j} f(\mathbf{x}_s^{u}) + a(s, \mathbf{x}_s) u_s \cdot D_{x^1} f(\mathbf{x}_s^{u})  \notag \\
 & \quad + \dfrac{1}{2} \int_0^t \operatorname{tr} \bigl(a(s, \mathbf{x}_s^{u}) D_{x^1}^2 \mathcal{T}_{s,t} f(\mathbf{x}_s^{u})\bigr) \biggr\}ds \notag \\
 & \quad \quad + \int_0^t \sigma(s, \mathbf{x}_s^u) D_{x^1} \mathcal{T}_{s,t} f(\mathbf{x}_s^{u}) \cdot d W_s   \label{Eq34}
\end{align}
Note that \eqref{Eq33} holds a.e. with respect to the Lebesque measure and that of Assumption~\ref{AS1} (cf. Remark~\ref{R2}) ensures that the distribution law of $\mathbf{x}_t^u$, for each $t$, is absolutely continuous with respect to the Lebesque measure. 

Then, using \eqref{Eq33}, the first integral on the right-hand side of \eqref{Eq34} a.s. equals to
\begin{align}
\Bigl(a(s, \mathbf{x}_s^{u}) u_s - a(s, \mathbf{x}_s^{u}) D_{x^1} \log h(s, \mathbf{x}_s^{u}) \Bigr) \cdot D_{x^1} \mathcal{T}_{s,t} f(\mathbf{x}_s^{u}),    \label{Eq35}
\end{align}
which is integrable with respect to the underlying probability measure. Thus, if we take the expectations in the above equation (and noting that our choice of $h$ which gives an admissible Markov-type control $a(t, \mathbf{x}_t) D_{x^1} \log h(t, \mathbf{x}_t)$ (cf. Bene\v{s} in \cite{Ben70} for related discussions)), then we obtain
\begin{align}
\mathbb{E} \Bigl \{f(\mathbf{x}_t^{u})\Bigr\} - \mathbb{E} \Bigl \{f(\mathbf{x}_t^{h}) \Bigr\} &= \mathbb{E} \left \{ \int_0^T a(s, \mathbf{x}_s^{u}) u_s \cdot D_{x^1} \mathcal{T}_{s,t} f(\mathbf{x}_s^{u}) ds \right \} \notag \\
& \quad - \mathbb{E} \left \{ \int_0^T a(s, \mathbf{x}_s^{h}) D_{x^1} \log h(s, \mathbf{x}_s^{h}) \cdot D_{x^1} \mathcal{T}_{s,t} f(\mathbf{x}_s^{h}) ds \right\} \notag\\
&= 0.   \label{Eq36}
\end{align}
The claim follows easily from this, which completes the proof of Proposition~\ref{P3}.
\end{proof}

\subsection{Connection with stochastic control problems} \label{S4.2}
Here, we provide a sufficient condition on the existence for the optimal admissible control associated with Problem~\ref{Pb1}. Let $\mathcal{S}_{t}$ be an operator, acting on the set of $\sigma$-finite measures on $\mathbb{R}^{nd}$, defined as follow
\begin{align}
 \dfrac{d \mathcal{S}_t \mu}{d \lambda}(\mathbf{x}_t) = \int q(0, \mathbf{y}, t, \mathbf{x}) \mu(d \mathbf{y}),  \label{Eq37}
\end{align}
where ${d \mathcal{S}_t \mu}/{d \lambda}$ is the Radon-Nikodym derivative with respect to the Lebesque measure $\lambda$ and $q(s, \mathbf{y}, t, \mathbf{x})$ is the transition density associated with the SDE in \eqref{Eq2}.

First, let us consider Problem~\ref{Pb1} with a deterministic initial condition, i.e., when $\mu_0$ assumes a Dirac measure that is concentrated at a point $\mathbf{\xi} \in \mathbb{R}^{nd}$. Then, we have the following result.

\begin{proposition} \label{P4}
Suppose that $\mu_0$ is a Dirac measure which is concentrated at a point $\mathbf{\xi} \in \mathbb{R}^{nd}$.  Further, assume that
\begin{align}
 H(\mu_T \vert \mathcal{S}_T\mu_0) < + \infty \label{Eq38}
\end{align}
and let $h(t,\mathbf{x})$ be given by
\begin{align}
 h(t,\mathbf{x}) = \int q(t, \mathbf{x}, T, \mathbf{z}) \dfrac{d \mu_T}{d \mathcal{S}_T\mu_0} (\mathbf{z}) d \mathbf{z}, \quad (t, x) \in [0, T] \times \mathbb{R}^{nd}.  \label{Eq39}
\end{align}
Then, $u_t^{\ast} = a(t, \mathbf{x}_t) D_{x^1} \log h(t, \mathbf{x}_t)$ solves Problem~\ref{Pb1} with an optimal value of 
\begin{align*}
 J(u_t^{\ast}) = H(\mu_T \vert \mathcal{S}_T\mu_0).
 \end{align*}
\end{proposition}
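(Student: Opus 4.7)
The plan is to prove existence and optimality in two parts: first verify that $u_t^{\ast}$ produces the prescribed end-point distributions, and then show via a verification argument using Fleming's log-transformation that no other admissible control can do better.

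For the admissibility/end-point part, I would first unpack the choice of $h$. Because $\mu_0 = \delta_{\mathbf{\xi}}$, the density $\tfrac{d\mathcal{S}_T\mu_0}{d\lambda}(\mathbf{z}) = q(0,\mathbf{\xi},T,\mathbf{z})$, so the function $g(\mathbf{z}) := \tfrac{d\mu_T}{d\mathcal{S}_T\mu_0}(\mathbf{z})$ is positive, measurable, and integrable against $q(0,\mathbf{\xi},T,\cdot)$ with $h(0,\mathbf{\xi}) = \int q(0,\mathbf{\xi},T,\mathbf{z}) g(\mathbf{z})\,d\mathbf{z} = \int d\mu_T = 1$. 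Thus $h(t,\mathbf{x}) = \mathbb{E}_{t,\mathbf{x}}\{g(\mathbf{x}_T)\}$ satisfies the hypotheses of Proposition~\ref{P1}, so the SDE~\eqref{Eq18} driven by $u_t^{\ast} = a(t,\mathbf{x}_t)D_{x^1}\log h(t,\mathbf{x}_t)$ has a weak solution $\mathbf{x}_t^{u^{\ast}} = \mathbf{x}_t^h$ with transition density $q^h(s,\mathbf{x},t,\mathbf{y}) = q(s,\mathbf{x},t,\mathbf{y}) h(t,\mathbf{y})/h(s,\mathbf{x})$. Starting from $\mathbf{x}_0 = \mathbf{\xi}$, the law at time $T$ has density $q(0,\mathbf{\xi},T,\mathbf{y}) g(\mathbf{y})/1 = d\mu_T/d\lambda$, giving property~(a). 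Admissibility follows once we verify the integrability in~(iii), which we get a posteriori from the cost computation below.

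For the cost evaluation, I would apply It\^o's formula to $\log h(t, \mathbf{x}_t^{u^{\ast}})$. Using $\partial_t h + \mathcal{L}_{t,\mathbf{x}} h = 0$, $D_{x^1} h = h\, D_{x^1}\log h$, and the identity $\|u^{\ast}\|_{a^{-1}}^2 = a D_{x^1}\log h \cdot D_{x^1}\log h$, a direct computation yields
\begin{align*}
d\log h(t,\mathbf{x}_t^{u^{\ast}}) = \tfrac{1}{2}\|u_t^{\ast}\|_{a^{-1}}^2\, dt + D_{x^1}\log h(t,\mathbf{x}_t^{u^{\ast}}) \cdot \sigma(t,\mathbf{x}_t^{u^{\ast}})\, dW_t.
\end{align*}
Integrating over $[0,T-\varepsilon]$, taking expectation (the stochastic integral is a martingale on this interval by the smoothness of $h$ there), and then letting $\varepsilon \downarrow 0$ using the limit $h(T,\mathbf{y}) = g(\mathbf{y})$ together with $h(0,\mathbf{\xi}) = 1$ and $\mathbf{x}_T^{u^{\ast}} \sim \mu_T$, I obtain
\begin{align*}
J(u_t^{\ast}) = \mathbb{E}\bigl\{\log h(T,\mathbf{x}_T^{u^{\ast}})\bigr\} = \int \log g\, d\mu_T = H(\mu_T \vert \mathcal{S}_T\mu_0),
\end{align*}
which is finite by hypothesis.

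For optimality, I would apply It\^o's formula again, this time to $\log h(t,\mathbf{x}_t^u)$ under an arbitrary admissible control $u$ satisfying the endpoint constraints of Problem~\ref{Pb1}. The same calculation now produces the drift $u_t \cdot D_{x^1}\log h - \tfrac{1}{2} a D_{x^1}\log h \cdot D_{x^1}\log h$, which by completing the square is pointwise bounded above by $\tfrac{1}{2}\|u_t\|_{a^{-1}}^2$ with equality iff $u_t = a D_{x^1}\log h = u_t^{\ast}$. After localization (through $t \le T-\varepsilon$) to ensure that the martingale term has zero mean, taking expectations and sending $\varepsilon \downarrow 0$ yields $H(\mu_T\vert \mathcal{S}_T\mu_0) = \mathbb{E}\{\log h(T,\mathbf{x}_T^u)\} \le J(u)$, establishing that $u_t^{\ast}$ achieves the infimum. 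The main obstacle I anticipate is the boundary layer near $t = T$: because $g = d\mu_T/d\mathcal{S}_T\mu_0$ need not be bounded or smooth, justifying the interchange of limit and expectation and the martingality of the stochastic integral on the full interval requires a careful localization argument together with the uniform approximations $(\phi_\varepsilon)$ and the hypoellipticity-based regularity of $q(t,\mathbf{x},T,\mathbf{z})$ on $[0,T-\varepsilon]$ furnished by Assumption~\ref{AS1}.
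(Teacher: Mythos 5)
Your argument is correct and reaches the stated conclusion, but only its first half follows the paper's route. The endpoint verification is identical: both you and the paper reduce to Proposition~\ref{P1} and the Doob $h$-transform identity $q^h(s,\mathbf{x},t,\mathbf{y})=q(s,\mathbf{x},t,\mathbf{y})h(t,\mathbf{y})/h(s,\mathbf{x})$ together with $h(0,\mathbf{\xi})=1$. For the cost of $u^{\ast}$, you apply It\^{o}'s formula to $\log h$ under the controlled (transformed) measure, whereas the paper stays under the original measure and tracks $\mathbb{E}\bigl\{h(t,\mathbf{x}_t)\log h(t,\mathbf{x}_t)\bigr\}$ via Krylov's extension of the It\^{o} formula, the stopping times $\tau_n$, the submartingale property of $h\log h$, optional sampling and Fatou; since $\mathbb{E}^{Q}\bigl\{\log h(t,\mathbf{x}_t)\bigr\}=\mathbb{E}\bigl\{h(t,\mathbf{x}_t)\log h(t,\mathbf{x}_t)\bigr\}$, these are the same computation in different coordinates, and both need the localization near $t=T$ that you flag. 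Where you genuinely diverge is the lower bound: you complete the square in the It\^{o} drift of $\log h(t,\mathbf{x}_t^{u})$, i.e.\ the pointwise verification inequality $u\cdot D_{x^1}\log h-\tfrac{1}{2}\,a D_{x^1}\log h\cdot D_{x^1}\log h\le\tfrac{1}{2}\bigl\Vert u\bigr\Vert_{a^{-1}}^2$ consistent with the Hamilton--Jacobi equation \eqref{Eq10}, while the paper uses the Girsanov representation and Jensen's inequality to obtain $1\ge\exp\bigl\{H(\mu_T\vert\mathcal{S}_T\mu_0)-J(u)\bigr\}$ in \eqref{Eq48}. Your version exhibits uniqueness of the minimizer directly (equality holds iff $u_t=a D_{x^1}\log h$ a.e.) and avoids the exponential-moment bookkeeping implicit in the Jensen step; the price is that, when you remove the localization, passing to the limit in $\mathbb{E}\bigl\{\log h(t\wedge\tau_n,\mathbf{x}^u_{t\wedge\tau_n})\bigr\}$ requires controlling the negative part of $\log h$ (it is unbounded below), which is exactly the uniform-integrability issue the paper's $h\log h$ submartingale estimate is designed to sidestep; this should be stated explicitly rather than left inside the phrase ``careful localization argument,'' but it is a technical gap of the same order as the one the paper itself leaves open.
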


\begin{proof}
Note that
\begin{align}
 h(0,\mathbf{x}_0) &= \int q(0, \mathbf{x}_0, T, \mathbf{z}) \dfrac{d \mu_T}{d \mathcal{S}_T\mu_0} (\mathbf{z}) d \mathbf{z} \notag\\
                             &= 1. \label{Eq40}
\end{align}
Recall that $h(t, \mathbf{x})$ belongs to $C_b^{1, 2} \bigl([0, T] \times \mathbb{R}^{nd}\bigr)$ and satisfies ${\partial} h_{\varepsilon}(t, \mathbf{x})/{\partial t} + \mathcal{L}_{t,\mathbf{x}}  h_{\varepsilon}(t, \mathbf{x}) = 0$ in $[0, T) \times \mathbb{R}^{nd}$ and 
\begin{align*}
h(T,\mathbf{x}) = \dfrac{d \mu_T} {d \mathcal{S}_T\mu_0} (\mathbf{x}), \quad \mathbf{x} \in \mathbb{R}^{nd}.
\end{align*}
Note that $h(t,\mathbf{x}_t)$ is martingale and $\mathbb{E} \bigl\{h(t,\mathbf{x}_t) \} =1$. Further, from Proposition~\ref{P1}, the SDE in \eqref{Eq18} admits a weak solution $\mathbf{x}_t^{h}$ in $[0, T]$ and then, by \eqref{Eq20}, $\mathbf{x}_T^{h}$ is distributed according to $\mu_T$.

Let $P_{\mathbf{x}}$ and $P_{\mathbf{x}^h}$ be measures induced by $\mathbf{x}_t$ and $\mathbf{x}_t^h$, respectively, on the path-space $C([0, T]; \mathbb{R}^{nd})$. Next, let us introduce the following change of measures
\begin{align}
\dfrac{d P_{\mathbf{x}^h}}{d P_{\mathbf{x}}} ({\mathbf{\xi}_{\cdot}}) = h(T, \mathbf{\xi}_T).  \label{Eq41}
\end{align}
Then, we have the following
\begin{align}
 \mathbb{E} \bigl\{ \log h(t,\mathbf{x}_t^h) \} &=  \mathbb{E} \bigl\{ h(t,\mathbf{x}_t) \log h(t,\mathbf{x}_t) \bigr\} \notag \\
                                                                      &\le  \mathbb{E} \bigl\{ h(T,\mathbf{x}_T) \log h(T,\mathbf{x}_T) \bigr\} \notag \\
                                                                     &= H(\mu_T \vert \mathcal{S}_T\mu_0), \label{Eq42}
\end{align}
where we used the fact that $h(t,\mathbf{x}_t) \log h(t,\mathbf{x}_t)$ is a submartingale process.\footnote{Note that $\phi(x) = x \log x$ is convex and bounded from below.}

Next, let us introduce the following sequence of stoping times
\begin{align*}
 \tau_n = \inf \bigl\{ s \, \bigl\vert \,\vert \mathbf{x}_s\vert > n \bigr\}
\end{align*}
and
\begin{align*}
 \tau_n (\omega) = T \quad \text{if} \quad \vert \mathbf{x}_{s}(\omega) \vert \le n \} \quad \text{for every} \quad 0 \le t \le T.
\end{align*}
For $t \le T$, if we apply Krylov's extension of the It\^{o} formula (cf. \cite[section~10, pp.~121--128]{Kry80}), then we have the following
\begin{align}
 \mathbb{E} \bigl\{ h(t \wedge \tau_n, \mathbf{x}_{t \wedge \tau_n}) \log h(t \wedge \tau_n, \mathbf{x}_{t \wedge \tau_n}) \bigr\} \le \mathbb{E} \int_0^{t \wedge \tau_n} \dfrac{1}{2} \bigl\Vert u_s^{\ast} \bigr\Vert_{a^{-1}}^2 h(s,\mathbf{x}_s) d s. \label{Eq43}
\end{align}
Further, from the optional sampling theorem, we have the following
\begin{align}
 \mathbb{E} \bigl\{ h(t \wedge \tau_n, \mathbf{x}_{t \wedge \tau_n}) \log h(t \wedge \tau_n, \mathbf{x}_{t \wedge \tau_n}) \bigr\} \le \mathbb{E} \bigl\{ h(t,\mathbf{x}_t) \log h(t,\mathbf{x}_t) \bigr\} \label{Eq44}
\end{align}
and, by \eqref{Eq42} 
\begin{align*}
 \mathbb{E} \bigl\{ h(t,\mathbf{x}_t) \log h(t,\mathbf{x}_t) \bigr\} < +\infty.
\end{align*}
On the other hand, $t \wedge \tau_n \rightarrow t$ as $n \rightarrow \infty$, then, from \eqref{Eq43} and \eqref{Eq44}, we have
\begin{align}
\mathbb{E} \int_0^{t}\dfrac{1}{2} \bigl\Vert u_s^{\ast} \bigr\Vert_{a^{-1}}^2 h(s,\mathbf{x}_s) d s \le \mathbb{E} \bigl\{ h(t, \mathbf{x}_t) \log h(t, \mathbf{x}_t) \bigr\}. \label{Eq45}
\end{align}
Moreover, if we apply the Fatou's lemma to the left-hand side of \eqref{Eq43} (which gives us the opposite inequality), then we have
\begin{align}
\mathbb{E} \int_0^{t}\dfrac{1}{2} \bigl\Vert u_s^{\ast} \bigr\Vert_{a^{-1}}^2 h(s,\mathbf{x}_s) d s = \mathbb{E} \bigl\{ h(t, \mathbf{x}_t) \log h(t, \mathbf{x}_t) \bigr\}.\label{Eq46}
\end{align}
Note that $h(t,\mathbf{x}_t) \log h(t,\mathbf{x}_t)$ is a submartingale process. Then, using again the Fatou's lemma and taking the limit $t \rightarrow T$, we have
\begin{align}
H(\mu_T \vert \mathcal{S}_T\mu_0) &= \mathbb{E} \bigl\{ h(T,\mathbf{x}_T) \log h(T,\mathbf{x}_T) \bigr\} \notag \\
                                                          &= \mathbb{E} \int_0^{T}\dfrac{1}{2} \bigl\Vert u_s^{\ast} \bigr\Vert_{a^{-1}}^2 h(s,\mathbf{x}_s) d s \notag \\
                                                          &= \mathbb{E} \int_0^{T}\dfrac{1}{2} \bigl\Vert u_s^{\ast} \bigr\Vert_{a^{-1}}^2 d s.  \label{Eq47}
\end{align}
If $u_t$ is any admissible control, then, using Girsanov's transformation (e.g., see \cite{Gir60}, \cite{DurB78} or \cite{TakW80}) and noting the fact that $\mathbf{x}_T^u$ is distributed according to $\mu_T$, we have the following
\begin{align}
 1 &=  \mathbb{E} \bigl\{ h(T,\mathbf{x}_T) \log h(T,\mathbf{x}_T) \bigr\} \notag \\
   &= \mathbb{E} \left\{ h(T,\mathbf{x}_T) \exp \left(\int_0^T \sigma^{-1} (t, \mathbf{x}_t^u) u_t \cdot dW_t - \int_0^{T} \dfrac{1}{2} \bigl\Vert u_t^{\ast} \bigr\Vert_{a^{-1}}^2 d t \right) \right\} \notag \\
   &\ge \exp \left\{ \mathbb{E}\left(\log h(T,\mathbf{x}_T)  + \int_0^T \sigma^{-1} (t, \mathbf{x}_t^u) u_t \cdot dW_t - \int_0^{T} \dfrac{1}{2} \bigl\Vert u_t^{\ast} \bigr\Vert_{a^{-1}}^2 d t \right) \right\} \notag \\                                                                    
   &= \exp \left\{ H(\mu_T \vert \mathcal{S}_T\mu_0) -  \mathbb{E} \int_0^{T} \dfrac{1}{2} \bigl\Vert u_t^{\ast} \bigr\Vert_{a^{-1}}^2 d t \right\}. \label{Eq48}
\end{align}
Hence, the above inequality further implies the following
\begin{align}
H(\mu_T \vert \mathcal{S}_T\mu_0)  \le \mathbb{E} \int_0^{T} \dfrac{1}{2} \bigl\Vert u_t^{\ast} \bigr\Vert_{a^{-1}}^2 d t. \label{Eq49}
\end{align}
This completes the proof of Proposition~\ref{P4}.
\end{proof}

\begin{remark} \label{R4}
Note that if $P_{\mathbf{x}_t}$ and $P_{\mathbf{x}_t^{u}}$ are measures induced by $\mathbf{x}_t$ and $\mathbf{x}_t^{u}$ on the path-space $C([0, T], \mathbb{R}^{nd})$. Then, using using Girsanov transformation (e.g., see \cite{Gir60}), we can reinterpret the cost functional $J(u_t)$ in terms of the {\it relative entropy} between $P_{\mathbf{x}_t^{u}}$ and $P_{\mathbf{x}_t}$, i.e., 
\begin{align}
  H(P_{\mathbf{x}_t^{u}} \vert P_{\mathbf{x}_t}) &= \int \log \dfrac{d P_{\mathbf{x}_t^{u}}}{d P_{\mathbf{x}_t}} d P_{\mathbf{x}_t^{u}} \notag \\
                                                                    &=  -\mathbb{E} \left\{ \int_0^{T} \sigma^{-1}(t, \mathbf{x}_t^{u}) u_t  \cdot d W_t  - \int_0^{T} \frac{1}{2}\bigl \Vert u_t \bigr \Vert_{a^{-1}}^2 d t \right\}  \notag \\
                                                                    &\equiv J(u_t).  \label{Eq50}
\end{align}
Moreover, when the admissible control is optimal (i.e., $u_t^{\ast} = a(t, \mathbf{x}_t) D_{x^1} \log h(t, \mathbf{x}_t)$), we have the following
\begin{align*}
 H(P_{\mathbf{x}_t^{u^{\ast}}} \vert P_{\mathbf{x}_t}) &= H(\mu_T \vert \mathcal{S}_T\mu_0) \\
                                                                                     & \equiv J(u_t^{\ast}),
\end{align*}
which implies the global {\it relative entropy} is exactly equal to the {\it relative entropy} between the final measures $\mu_T$ and $\mathcal{S}_T\mu_0$.
\end{remark}

Note that, from Proposition~{\ref{P2}}, for $\mu_0$ and $\mu_T$ (with $\mu_T \ll \mathcal{S}_T \mu_0$), there exist two $\sigma$-finite measures $\nu_0$ and $\nu_T$ such that the statement in \eqref{Eq28} holds. Letting $\rho_T(\mathbf{x}) = d\nu_T/d\nu_0$, then we have the following relations
\begin{align}
\dfrac{d \mu_T}{d \lambda} & = \rho_T(\mathbf{x}) \int q(0, \mathbf{y}, T, \mathbf{x}) \nu_0(d\mathbf{y}) \label{Eq51}
\end{align}
and
\begin{align}
\dfrac{d \mu_0}{d \nu_0} = \int q(0, \mathbf{x}, T, \mathbf{z}) \rho_T(\mathbf{z}) d\mathbf{z}.  \label{Eq52}
\end{align}

For any initial random variable $\mathbf{x}_0 = \mathbf{\xi}$ distributed according to $\mu_0$ and satisfying $\int \vert \mathbf{\xi}\vert^2 d \mu_0< +\infty$, then we have the following result which is a generalization of Proposition~\ref{P4}.

\begin{proposition} \label{P5}
Suppose that $H(\mu_T \vert \mathcal{S}_T\nu_0) < +\infty$ and $\int \bigl({d \mu_0}/{d \nu_0}\bigr)d \mu_0 < +\infty$. Let $h(t,\mathbf{x})$ be given by
\begin{align}
 h(t,\mathbf{x}) = \int q(t, \mathbf{x}, T, \mathbf{z}) \rho_T(\mathbf{z}) d\mathbf{z}, \quad (t, x) \in [0, T] \times \mathbb{R}^{nd}. \label{Eq53}
\end{align}
 Then, $\mathbf{u}_t^{\ast} = a(t, \mathbf{x}_t) D_{x^1} \log h(t, \mathbf{x}_t)$ solves Problem~\ref{Pb1} with an optimal value of
\begin{align}
 J(u_t^{\ast}) &= \mathbb{E} \int_0^{T} \frac{1}{2}\bigl \Vert u_t^{\ast} \bigr \Vert_{a^{-1}}^2 d t \notag \\
                                    &= H(\mu_T \vert \mathcal{S}_T\nu_0) - H(\mu_0 \vert \nu_0) . \label{Eq54}
\end{align}
\end{proposition}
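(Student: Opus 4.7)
The plan is to mirror the structure of the proof of Proposition~\ref{P4}, the key difference being that $h(0,\mathbf{x}_0)$ is now a nontrivial function of the random initial datum rather than the constant $1$, so that the ``initial entropy'' term $H(\mu_0\vert \nu_0)$ appears naturally. I would rely on (i) Proposition~\ref{P1} to supply a weak solution of the drift-perturbed SDE together with its transition density, (ii) the Beurling--Jamison decomposition of Proposition~\ref{P2} in the form of the identities \eqref{Eq51}--\eqref{Eq52}, and (iii) a completion-of-squares computation based on It\^{o}'s rule applied to $\log h(t,\mathbf{x}_t)$ along the optimal and competing trajectories.

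First, I would verify that $h$ defined by \eqref{Eq53} lies in $C_b^{1,2}\bigl([0,T]\times \mathbb{R}^{nd}\bigr)$ and satisfies $\partial_t h + \mathcal{L}_{t,\mathbf{x}}h=0$ on $[0,T)\times \mathbb{R}^{nd}$ with $h(T,\mathbf{x})=\rho_T(\mathbf{x})$; regularity and boundedness follow from Assumption~\ref{AS1}, the smoothness of $q$, and the two finite-entropy hypotheses. Using \eqref{Eq51}--\eqref{Eq52} I would identify the boundary values
\begin{align*}
h(T,\mathbf{x}) = \rho_T(\mathbf{x}) = \dfrac{d\mu_T}{d \mathcal{S}_T\nu_0}(\mathbf{x}), \qquad h(0,\mathbf{x}_0) = \dfrac{d \mu_0}{d \nu_0}(\mathbf{x}_0).
\end{align*}
Proposition~\ref{P1} then yields a weak solution $\mathbf{x}_t^h$ of \eqref{Eq18} with transition density $q^h(s,\mathbf{x},t,\mathbf{y}) = q(s,\mathbf{x},t,\mathbf{y})\,h(t,\mathbf{y})/h(s,\mathbf{x})$; integrating against $\mu_0$ and using the cancellation $d\mu_0(\mathbf{x}_0)/h(0,\mathbf{x}_0) = d\nu_0(\mathbf{x}_0)$ together with \eqref{Eq51} shows that the time-$T$ marginal of $\mathbf{x}_t^h$ equals $\mu_T$, so $u_t^{\ast} = a(t,\mathbf{x}_t)D_{x^1}\log h(t,\mathbf{x}_t)$ is feasible for Problem~\ref{Pb1}.

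The central computation is the evaluation of $J(u_t^{\ast})$. Applying It\^{o}'s rule to $\log h(t,\mathbf{x}_t^{u^{\ast}})$ and invoking $\partial_t h + \mathcal{L}_{t,\mathbf{x}}h=0$, the finite-variation drift collapses to $\tfrac{1}{2}\|u_t^{\ast}\|_{a^{-1}}^2$, giving
\begin{align*}
\log h(T,\mathbf{x}_T^{u^{\ast}}) - \log h(0,\mathbf{x}_0^{u^{\ast}})
= \int_0^T \tfrac{1}{2}\bigl\|u_t^{\ast}\bigr\|_{a^{-1}}^2 dt + \int_0^T \sigma^T(t,\mathbf{x}_t^{u^{\ast}})D_{x^1}\log h\cdot dW_t^{u^{\ast}},
\end{align*}
where $W^{u^{\ast}}$ is a Brownian motion under the measure induced by $u_t^{\ast}$. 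Taking expectations and combining the two boundary-value identifications of $h$ with the marginal constraints $\mathbf{x}_0^{u^{\ast}}\sim \mu_0$ and $\mathbf{x}_T^{u^{\ast}}\sim \mu_T$ then produces $J(u_t^{\ast}) = H(\mu_T\vert \mathcal{S}_T\nu_0) - H(\mu_0\vert \nu_0)$.

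For optimality, the same It\^{o} identity applied along any admissible $u_t$, after completing the square against $u_t^{\ast}=a\,D_{x^1}\log h$, yields
\begin{align*}
\mathbb{E}^{P^u}\bigl\{\log h(T,\mathbf{x}_T^u) - \log h(0,\mathbf{x}_0^u)\bigr\}
= J(u_t) - \tfrac{1}{2}\mathbb{E}^{P^u}\int_0^T \bigl\|u_t - u_t^{\ast}\bigr\|_{a^{-1}}^2 dt,
\end{align*}
and since the admissibility constraints force the left-hand side to equal $H(\mu_T\vert \mathcal{S}_T\nu_0) - H(\mu_0\vert \nu_0)$, dropping the nonnegative square term gives $J(u_t)\ge J(u_t^{\ast})$, with equality only when $u_t=u_t^{\ast}$. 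The main technical obstacle I anticipate is justifying that the It\^{o} stochastic integral is a true martingale, not merely a local one, under both $P^{u^{\ast}}$ and $P^u$; I would address this via Krylov's extension of It\^{o}'s formula (cf.~\cite{Kry80}), localization through $\tau_n = \inf\{t : |\mathbf{x}_t|>n\}$, and Fatou/optional-sampling bounds parallel to \eqref{Eq43}--\eqref{Eq46}, the new ingredient relative to Proposition~\ref{P4} being the $\mu_0$-integrability of $\log h(0,\cdot)$, which is secured by the hypothesis $\int(d\mu_0/d\nu_0)d\mu_0 < +\infty$.
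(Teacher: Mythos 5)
Your proposal is correct and follows essentially the same route as the paper's proof: identify the boundary values $h(T,\cdot)=\rho_T=d\mu_T/d\mathcal{S}_T\nu_0$ and $h(0,\cdot)=d\mu_0/d\nu_0$ from the Beurling--Jamison system \eqref{Eq51}--\eqref{Eq52}, invoke Proposition~\ref{P1} for feasibility of $u_t^{\ast}$, and evaluate the cost via the logarithmic transformation with Krylov--It\^{o} localization and Fatou --- exactly the paper's passage from \eqref{Eq55} through \eqref{Eq59}. (Your computation of $\mathbb{E}\{\log h(t,\mathbf{x}_t^{h})\}$ under the tilted measure is the same as the paper's computation of $\mathbb{E}\{h\log h\}/h(0,\mathbf{x}_0)$ under the reference measure, via the change of measure \eqref{Eq56}.) The one place you genuinely diverge is the lower bound for an arbitrary admissible $u_t$: the paper adapts the Jensen-inequality argument applied to the Girsanov exponential, as in \eqref{Eq48} of Proposition~\ref{P4}, to assert \eqref{Eq60}, whereas you complete the square against $u_t^{\ast}=a\,D_{x^1}\log h$ inside the It\^{o} drift of $\log h(t,\mathbf{x}_t^{u})$. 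Your variant is more direct, avoids the somewhat garbled exponential manipulation of \eqref{Eq48}, and yields the explicit defect term $\tfrac12\mathbb{E}\int_0^T\bigl\Vert u_t-u_t^{\ast}\bigr\Vert_{a^{-1}}^2\,dt$, hence uniqueness of the minimizer --- a statement the paper does not make. The technical caveats you flag (true versus local martingale, and the sign-indefiniteness of the drift $\langle u,u^{\ast}\rangle_{a^{-1}}-\tfrac12\Vert u^{\ast}\Vert_{a^{-1}}^2$ when passing to the limit in the localization, which requires splitting off the nonnegative square before applying Fatou) are real, but you propose to resolve them at the same level of rigor as the paper's own \eqref{Eq43}--\eqref{Eq46} and \eqref{Eq58}, with the $\mu_0$-integrability of $\log h(0,\cdot)$ correctly traced to the hypothesis $\int (d\mu_0/d\nu_0)\,d\mu_0<+\infty$.
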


\begin{proof}
First, let us show that $h(t, \mathbf{x}) = \mathbb{E}_{t, \mathbf{x}} \bigl\{\rho_T(\mathbf{x}_T) \bigr\}$. Note that, from \eqref{Eq54}, this is true if we show $\mathbb{E}_{t, \mathbf{x}} \bigl\{\rho_T(\mathbf{x}_T) \bigr\} < +\infty$, i.e.,

\begin{align}
 \mathbb{E}_{t, \mathbf{x}} \bigl\{\rho_T(\mathbf{x}_T) \bigr\} &= \int \rho_T(\mathbf{x}) d\mathcal{S}_T\mu_0 \notag \\
                                                                                               &= \int \rho_T(\mathbf{x}) \left(\int q(0, \mathbf{y}, T, \mathbf{x})d \mu_0(\mathbf{y}) \right) d \mathbf{x} \notag\\
                                                                                               &= \int \left(\int q(0, \mathbf{y}, T, \mathbf{x}) \rho_T(\mathbf{x}) d \mathbf{x} \right) d \mu_0(\mathbf{y}) \notag\\
                                                                                               &= \int \frac{d \mu_0}{d \nu_0}d \mu_0 < +\infty.  \label{Eq55}
\end{align}
Then, we can process as follow (cf. the proof part of Proposition~\ref{P4}). Let us introduce the following change of measures
\begin{align}
\dfrac{d P_{\mathbf{x}^h}}{d P_{\mathbf{x}}} ({\mathbf{\xi}_{\cdot}}) = \dfrac{\rho_T(\mathbf{\xi}_T)}{h(0, \mathbf{\xi}_0)}.  \label{Eq56}
\end{align}
Then, we have the following (cf. equation~\eqref{Eq42})
\begin{align}
 \mathbb{E} \Bigl\{ \log h(t,\mathbf{x}_t^h) \Bigr\} &= \mathbb{E} \left \{ \dfrac{1}{h(0,\mathbf{x}_0)} \mathbb{E} \bigl\{ h(t,\mathbf{x}_t) \log h(t,\mathbf{x}_t) \bigl \vert \mathbf{x}_0 \bigr\} \right \}  \notag \\
                                                                      &\le \mathbb{E} \left \{\dfrac{\rho_T(\mathbf{x}_T)}{h(0,\mathbf{x}_0)} \log h(T,\mathbf{x}_T) \right\} \notag \\
                                                                      &\le \mathbb{E} \Bigl\{ \log h(T,\mathbf{x}_T^h) \Bigr\} \notag \\
                                                                     &= H(\mu_T \vert \mathcal{S}_T\nu_0) \label{Eq57}
\end{align}
and in place of \eqref{Eq43}, we have the following
\begin{align}
 \mathbb{E} \left \{ \dfrac{1}{h(0,\mathbf{x}_0)} \mathbb{E} \bigl\{ h(t \wedge \tau_n, \mathbf{x}_{t \wedge \tau_n}) \log h(t \wedge \tau_n, \mathbf{x}_{t \wedge \tau_n}) \bigl \vert \mathbf{x}_0 \bigr \} \right \} -  \mathbb{E} \Bigl\{ \log h(0, \mathbf{x}_0) \Bigr\} \notag \\
  = \mathbb{E} \left \{ \dfrac{1}{h(0,\mathbf{x}_0)} \mathbb{E} \int_0^{t \wedge \tau_n} \dfrac{1}{2} \bigl\Vert u_s^{\ast} \bigr\Vert_{a^{-1}}^2 h(s,\mathbf{x}_s) d s \right \}, \label{Eq58}
\end{align}
where
\begin{align*}
 \mathbb{E} \bigl\{ \log h(0, \mathbf{x}_0) \bigr\} &= \int \log \dfrac{d\mu_0} {d\nu_0} d \mu_0 \notag \\
                                                                             &= H(\mu_T \vert \nu_0) < +\infty
\end{align*}
and, from Jensen's inequality, we further have the following
\begin{align*}
 0 \le  \int \log \dfrac{d\mu_0} {d\nu_0} d \mu_0 \le \log \int \dfrac{d\mu_0} {d\nu_0} d \nu_0  < +\infty.
\end{align*}
Then, using the limit arguments (i.e., the Fatou's lemma) as in Proposition~\ref{P5}, we obtain the following
\begin{align}
 \mathbb{E} \int_0^{T} \dfrac{1}{2} \bigl\Vert u_t^{\ast} \bigr\Vert_{a^{-1}}^2 d t = H(\mu_T \vert \mathcal{S}_T\nu_0) - H(\mu_0 \vert \nu_0). \label{Eq59}
\end{align}
Moreover, for any admissible control $u_t$, then we have
\begin{align}
 \mathbb{E} \int_0^{T} \dfrac{1}{2} \bigl\Vert u_t \bigr\Vert_{a^{-1}}^2 d t \ge H(\mu_T \vert \mathcal{S}_T\nu_0) - H(\mu_0 \vert \nu_0). \label{Eq60}
\end{align}
This completes the proof of Proposition~\ref{P5}.
\end{proof}

Note that the conditions (i.e., $H(\mu_T \vert \mathcal{S}_T\nu_0) < +\infty$ and $\int \bigl( {d \mu_0}/{d \nu_0} \bigr)d \mu_0 < +\infty$) under which Proposition~\ref{P5} holds are rather difficult to verify. However, when $\mu_0$ has compact support, we can relax them with suitable conditions due to the following lemma.

\begin{lemma} \label{L1}
Suppose that $\mu_0$ has compact support and $H(\mu_T \vert \mathcal{S}_T\mu_0) < +\infty$. Then, we have
\begin{align*}
H(\mu_T \vert \mathcal{S}_T\nu_0) < +\infty \quad \text{and} \quad \int \Bigl(\dfrac{d \mu_0}{d \nu_0}\Bigr) d \mu_0 < +\infty.
\end{align*}
\end{lemma}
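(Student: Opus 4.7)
The plan is to reduce both assertions to the single essential-boundedness statement
$\|f\|_{L^\infty(K)} < +\infty$, where $f := d\mu_0/d\nu_0$ and $K := \operatorname{supp}(\mu_0)$, and then read off the two finiteness claims from it. Note first that by Proposition~\ref{P2}, $\nu_0 \ll \mu_0$, so $\nu_0$ is likewise concentrated on $K$, and every integral against $\mu_0$ or $\nu_0$ is effectively an integral over $K$. Also, $\mu_0 \ll \nu_0$ (mutual absolute continuity in Proposition~\ref{P2}) ensures that $f$ is well defined.

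The central step is the boundedness of $f$ on $K$. By~\eqref{Eq52}, $f(\mathbf{y}) = \int q(0,\mathbf{y},T,\mathbf{z}) \rho_T(\mathbf{z}) d\mathbf{z}$ is exactly $h(0,\mathbf{y})$, the positive solution of the backward Cauchy problem associated with $\partial_t + \mathcal{L}_{t,\mathbf{x}}$ and terminal data $\rho_T$. Since $\int_K f \, d\nu_0 = \mu_0(K) = 1$, $f$ is finite $\nu_0$-a.e.\ on $K$; in particular, there exists $\mathbf{y}_0 \in K$ with $f(\mathbf{y}_0) < +\infty$. Under Assumption~\ref{AS1} (hypoellipticity of $\mathcal{L}_{t,\mathbf{x}}$ together with the smoothness of the coefficients), $\partial_t + \mathcal{L}_{t,\mathbf{x}}$ satisfies a parabolic Harnack inequality of Bony type; applied to $h$ on parabolic cylinders covering $K \times [0,T]$ it produces a constant $C_K$, depending only on $K$ and $T$, such that
\[
 f(\mathbf{y}) \le C_K \, f(\mathbf{y}_0), \qquad \forall \, \mathbf{y} \in K.
\]
Setting $M := C_K f(\mathbf{y}_0) < +\infty$ gives $\|f\|_{L^\infty(K)} \le M$.

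Once $M < +\infty$ is in hand, both claims follow quickly. For the second, $\int (d\mu_0/d\nu_0)\, d\mu_0 = \int_K f\, d\mu_0 \le M\, \mu_0(K) = M$. For the first, the identity
\[
 \frac{d\mathcal{S}_T\mu_0}{d\lambda}(\mathbf{x}) = \int_K q(0,\mathbf{y},T,\mathbf{x}) f(\mathbf{y})\, \nu_0(d\mathbf{y}) \le M \int_K q(0,\mathbf{y},T,\mathbf{x})\, \nu_0(d\mathbf{y}) = M\, \frac{d\mathcal{S}_T\nu_0}{d\lambda}(\mathbf{x})
\]
yields $\chi := d\mathcal{S}_T\mu_0/d\mathcal{S}_T\nu_0 \le M$ a.e., and in particular $\mathcal{S}_T\mu_0 \ll \mathcal{S}_T\nu_0$. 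Combining this with $\mu_T \ll \mathcal{S}_T\mu_0$ (from $H(\mu_T \vert \mathcal{S}_T\mu_0) < +\infty$) and the Radon--Nikodym chain rule $d\mu_T/d\mathcal{S}_T\nu_0 = \chi \cdot d\mu_T/d\mathcal{S}_T\mu_0$ gives
\[
 H(\mu_T \vert \mathcal{S}_T\nu_0) = H(\mu_T \vert \mathcal{S}_T\mu_0) + \int \log \chi\, d\mu_T \le H(\mu_T \vert \mathcal{S}_T\mu_0) + \log M < +\infty.
\]

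The genuine obstacle here is the invocation of the parabolic Harnack inequality for the degenerate operator $\partial_t + \mathcal{L}_{t,\mathbf{x}}$: standard Moser-type arguments for uniformly parabolic equations do not apply, and one must rely on the subelliptic Harnack theory (in the spirit of Bony or Rothschild--Stein) made available by the weak Hörmander condition of Remark~\ref{R1} together with Assumption~\ref{AS1}(b). Once this estimate is granted, every remaining step is a routine manipulation of Radon--Nikodym derivatives and the chain rule for relative entropy.
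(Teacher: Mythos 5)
Your overall architecture is sound and, for the entropy bound, genuinely different from the paper's: you derive a pointwise bound $\chi=d\mathcal{S}_T\mu_0/d\mathcal{S}_T\nu_0\le M$ and conclude $\int\log\chi\,d\mu_T\le\log M$ in one stroke, whereas the paper splits $\int\log\chi\,d\mu_T$ into its negative and positive parts and controls each by a separate application of Jensen's inequality (convexity of $\log^{-}$ plus Fubini for the negative part, concavity of $\log$ for the positive part), reducing both to $\int\log^{\mp}h(0,\mathbf{x})\,d\mu_0(\mathbf{x})<+\infty$. Your route is cleaner and gives the quantitative bound $H(\mu_T\vert\mathcal{S}_T\nu_0)\le H(\mu_T\vert\mathcal{S}_T\mu_0)+\log M$; the paper's route asks less of $h$ (only integrability of $\log^{\pm}h(0,\cdot)$ against $\mu_0$, not an $L^\infty$ bound). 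The Radon--Nikodym bookkeeping, the use of $\nu_0\ll\mu_0$ from Proposition~\ref{P2}, and the chain rule for relative entropy are all fine.

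The weak point is the one step you yourself flag: the uniform bound $\|f\|_{L^\infty(K)}\le M$ via a parabolic Harnack inequality. This is both unnecessary and not valid as stated. A parabolic Harnack inequality compares a supremum over one time slab with an infimum over a \emph{strictly later} slab (for the backward equation satisfied by $h$, after time reversal this means values of $h$ at later times are controlled by values at earlier times, with a mandatory waiting time); it never yields a same-time-slice comparison, and the slice $t=0$ you need sits on the parabolic boundary of $[0,T]\times\mathbb{R}^{nd}$, so there is no room to chain cylinders. On top of that, for the weak-H\"{o}rmander operator $\partial_t+\mathcal{L}_{t,\mathbf{x}}$ the relevant Harnack theory is anisotropic and far from off-the-shelf. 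The paper avoids all of this: by the discussion around \eqref{Eq16}, any $h$ of the form \eqref{Eq53} belongs to $C_b^{1,2}\bigl([0,T]\times\mathbb{R}^{nd}\bigr)$, so $h(0,\cdot)=d\mu_0/d\nu_0$ is continuous and therefore bounded on the compact set $K=\operatorname{supp}(\mu_0)$. Replace your Harnack step by this observation, set $M:=\sup_{K}h(0,\cdot)<+\infty$, and the remainder of your argument goes through verbatim.
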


\begin{proof}
Note that $h(t, \mathbf{x})$ is smooth and if $\mu_0$ has compact support. Then, we have
\begin{align*}
\int \Bigl(\dfrac{d \mu_0}{d \nu_0}\Bigr) d \mu_0 = \int h(0, \mathbf{x}) d \mu_0(\mathbf{x}) < +\infty.
\end{align*}
Moreover,
\begin{align*}
H(\mu_T \vert \mathcal{S}_T \nu_0) & = \int \log \dfrac{d \mu_T}{d\mathcal{S}_T \mu_0} d \mu_T + \int \log \dfrac{d\mathcal{S}_T \mu_0} {d\mathcal{S}_T \nu_0} d\mu_T \\
                                                          & = H(\mu_T \vert \mathcal{S}_T \mu_0) + \int \log \dfrac{d\mathcal{S}_T \mu_0} {d\mathcal{S}_T \nu_0} d\mu_T.
\end{align*}
Then, we have to show that $\int \log (d\mathcal{S}_T \mu_0/d\mathcal{S}_T \nu_0) d\mu_T$ exists.

Let us define $\log^{-} \phi = \max \bigl\{ -\log \phi,\, 0 \bigr\}$. Then, from Jensen inequality, we have the following
\begin{align*}
\log^{-} \dfrac {d\mathcal{S}_T \mu_0} {d \mu_T} (\mathbf{y}) &=  \log^{-} \int \dfrac{d \mu_0} {d \nu_0}(\mathbf{x})  \dfrac{q(0, \mathbf{x}, T, \mathbf{y}) } {(d\mathcal{S}_T \nu_0/d \lambda)(\mathbf{y})} d\nu_0 (\mathbf{x}) \\
                                                          &\le  \int \left ( \log^{-} \dfrac{d \mu_0} {d \nu_0}(\mathbf{x}) \right)  \dfrac{q(0, \mathbf{x}, T, \mathbf{y}) } {(d\mathcal{S}_T \nu_0/d \lambda)(\mathbf{y})} d\nu_0 (\mathbf{x}).
\end{align*}
Hence, we have
\begin{align*}
\int \log^{-} \dfrac {d\mathcal{S}_T \mu_0} {d \nu_0} (\mathbf{y}) d \mu_T (\mathbf{y}) &\le \int \left(\int \left ( \log^{-} \dfrac{d \mu_0} {d \nu_0}(\mathbf{x}) \right) q(0, \mathbf{x}, T, \mathbf{y}) d\nu_0 (\mathbf{x}) \right) d\nu_T (\mathbf{y}) \\
&= \int \log^{-} \dfrac{d \mu_0} {d \nu_0} d\mu_0 \\
&= \int \log^{-} h(0, \mathbf{x}) d\mu_0 (\mathbf{x}) < +\infty.
\end{align*}
Moreover, we have the following
\begin{align*}
\int \log \dfrac {d\mathcal{S}_T \mu_0} {d \mathcal{S}_T \nu_0} d \mu_T &\le \int \log \dfrac {d\mathcal{S}_T \mu_0} {d \mathcal{S}_T \nu_0} d \mu_T \\
&=  \log \int h(T, \mathbf{y}) d \mathcal{S}_T\mu_0 (\mathbf{y}) \\
&= \int \log h(0, \mathbf{x}) d\mu_0 (\mathbf{x}) < +\infty.
\end{align*}
This completes the proof of Lemma~\ref{L1}.
\end{proof}

\begin{remark} \label{R5}
In Problem~\ref{Pb1}, we can also include a state dependent term in the cost functional of \eqref{Eq31}, i.e.,
\begin{align*}
 J(\mathbf{x}_t^u, u_t) = \mathbb{E} \int_0^{T} \left \{ \frac{1}{2}\bigl \Vert u_t \bigr \Vert_{a^{-1}}^2 + \kappa(\mathbf{x}_t^{u})\right \} d t,
\end{align*}
where $\kappa$ is a nonnegative, real-valued continuous function on $\mathbb{R}^{nd}$. 

Then, we can proceed in the same way as above if we take $h(t, \mathbf{x})$ in the kernel of the operator $\bigl(\partial / \partial t + \mathcal{L}_{t,\mathbf{x}} - \kappa\bigr)$ and $\tilde{q}(s, \mathbf{x}, t, \mathbf{y})$ as the fundamental solution of $\partial \tilde{q}(t, \mathbf{x}, T, \mathbf{y})/\partial t + \mathcal{L}_{t,\mathbf{x}} \tilde{q}(t, \mathbf{x}, T, \mathbf{y}) - \kappa(t, \mathbf{x}) \tilde{q}(t, \mathbf{x}, T, \mathbf{y}) = 0$ in $[0, T) \times \mathbb{R}^{nd}$ and $\lim_{t \uparrow T} \tilde{q}(t, \mathbf{x}, T, \mathbf{y}) = \delta_{\mathbf{y}}(\mathbf{x})$ for $\mathbf{x}, \mathbf{y} \in \mathbb{R}^{nd}$. Moreover, $h(t, \mathbf{x})$ admits the following probabilistic representation
\begin{align*}
h(s, \mathbf{x}) = \mathbb{E}_{s, \mathbf{x}} \left \{h(T, \mathbf{x}_T) \exp \left \{ -\int_s^{T} \kappa(\mathbf{x}_t) dt \right \} \right \} d t.
\end{align*}
Note that, $\tilde{q}(s, \mathbf{x}, t, \mathbf{y})$ is the transition density of the killed diffusion process with the same drift and diffusion terms as that of \eqref{Eq2} with killing rate (or potential) $\kappa$ (e.g., see \cite{BluG68} for additional discussions).
\end{remark}

\section{Remarks on the invariance property of the path-space measure} \label{S5}
In this section, we briefly remark on the invariance property of the path-space measure of the diffusion process pertaining to the chain of distributed systems. Note that such an interpretation makes sense if the diffusion process $(\mathbf{x}_{t})_{0 \le t \le T}$, which is associated with the SDE in \eqref{Eq2}, is considered as a random variable with values on a space of functions $C([0,T]; \mathbb{R}^{nd})$ containing its trajectories (e.g., see \cite{Wak89}, \cite{DurB78} or \cite{TakW80}). As a result, we can determine {\it local information} about the measure induced by $\mathbf{x}_{[0,T]}$. For example, for a given $\mathbf{\varphi} \in C^2([0,T]; \mathbb{R}^{nd})$ and small $\varepsilon > 0$, we can provide an asymptotic estimate on the probability of a small $\varepsilon$-tube around $C^2([0,T]; \mathbb{R}^{nd})$-function using
\begin{align}
 \mathbb{P} \left \{ \Vert\mathbf{x}_{\cdot} - \mathbf{\varphi} \Vert < \varepsilon \right\} \sim \kappa_{\varepsilon} \exp \Bigl \{ -\int_0^T L(t, \mathbf{\varphi}, \dot{\mathbf{\varphi}})dt \Bigr\} \quad \text{as} \quad \varepsilon \rightarrow 0, \label{Eq61}
\end{align}
where $L(t, \varphi, \dot{\mathbf{\varphi}})$ is the Lagrange function given by\footnote{$\bigl \Vert \mathbf{M}(t, \mathbf{\varphi}) - \dot{\mathbf{\varphi}} \bigr \Vert_{\tilde{a}^{-1}}^2 \triangleq \bigl \Vert \sigma^{-1} G^T (t, \mathbf{\varphi}) \bigl(\mathbf{M}(t, \mathbf{\varphi}) - \dot{\mathbf{\varphi}}\bigr) \bigr \Vert^2$.}
\begin{align}
 L(t, \mathbf{\varphi}, \dot{\mathbf{\varphi}}) = \frac{1}{2} \Bigl\Vert \mathbf{M}(t, \mathbf{\varphi}) - \dot{\mathbf{\varphi}} \Bigr \Vert_{\tilde{a}^{-1}}^2. \label{Eq62}
\end{align}
Note that the above asymptotic estimate in \eqref{Eq61} provides a probabilistic interpretation for the most probable paths, i.e., the most probably trajectories that minimize the functional $\int_0^T L(t, \varphi, \dot{\mathbf{\varphi}}) dt$. Moreover, these extreme trajectories (which belong to $C^2([0,T]; \mathbb{R}^{nd})$) are solutions to the following Euler-Lagrange differential equation
\begin{align}
\dfrac {\partial}{\partial \mathbf{\varphi}} L(t, \varphi, \dot{\mathbf{\varphi}}) - \dfrac{d} {d t} \dfrac {\partial} {\partial \dot{\mathbf{\varphi}}} L(t, \mathbf{\varphi}, \dot{\mathbf{\varphi}}) = 0. \label{Eq63}
\end{align}
The following result shows that adding a perturbation $G\, a(t, \mathbf{x}) D_{x^1} \log h(t, \mathbf{x})$ to the original drift term $\mathbf{M}(t, \mathbf{x})$ does not change the extreme trajectories of the diffusion process associated with the chain of distributed systems in \eqref{Eq2}.

\begin{proposition} \label{P6}
Assume that $\mathbf{M}(t, \mathbf{x}_t) \in C_b^2([0,T] \times \mathbb{R}^{nd}; \mathbb{R}^{nd})$ and $\sigma(t, \mathbf{x}_t) \in C_b^2([0,T] \times \mathbb{R}^{nd};  \mathbb{R}^{d \times d})$; and suppose that $h(t, \mathbf{x}) \in C_b^{1, 2} \bigl([0, T] \times \mathbb{R}^{nd}\bigr)$ is a strictly positive function that satisfies $\partial h(t, \mathbf{x})/ \partial t + \mathcal{L}_{t,\mathbf{x}} h(t, \mathbf{x}) = 0$ in $[0, T) \times \mathbb{R}^{nd}$. Then, the diffusion processes $\hat{\mathbf{x}}_t$ and $\tilde{\mathbf{x}}_t$ with the same diffusion term $\sigma(t, \mathbf{x})$ and whose drifts are $\mathbf{M}(t, \mathbf{x})$ and $\mathbf{M}(t, \mathbf{x}) + G\,a(t, \mathbf{x}) D_{x^1} \log h(t, \mathbf{x})$, respectively, have the same extreme trajectories.
\end{proposition}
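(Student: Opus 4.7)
The strategy is the classical gauge-equivalence argument: to show that $\tilde L(t,\varphi,\dot\varphi) := \tfrac{1}{2}\|\tilde{\mathbf{M}}(t,\varphi)-\dot\varphi\|_{\tilde a^{-1}}^2$ and $L(t,\varphi,\dot\varphi) := \tfrac{1}{2}\|\mathbf{M}(t,\varphi)-\dot\varphi\|_{\tilde a^{-1}}^2$ differ by a total time derivative $\frac{d}{dt}F(t,\varphi)$, which would leave the Euler–Lagrange equation \eqref{Eq63} unchanged and hence force the two processes $\hat{\mathbf{x}}_t$ and $\tilde{\mathbf{x}}_t$ to share the same extremal trajectories. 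The first step is structural: because $\|\cdot\|_{\tilde a^{-1}}^2 = \|\sigma^{-1}G^T(\cdot)\|^2$ and $G^T$ projects onto the first $d$-block (with $G^T G = I_d$), both Lagrangians collapse to scalars depending only on $(m_1,\dot\varphi^1)$, namely $L = \tfrac{1}{2}(m_1-\dot\varphi^1)^T a^{-1}(m_1-\dot\varphi^1)$ and similarly for $\tilde L$ with $m_1$ replaced by $m_1 + a D_{x^1}\log h$.

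A direct expansion then yields
\begin{align*}
\tilde L - L = (m_1 - \dot\varphi^1)\cdot D_{x^1}\log h + \tfrac{1}{2}\,a\,D_{x^1}\log h\cdot D_{x^1}\log h.
\end{align*}
To recast this as a total derivative I would invoke the hypothesis $\partial_t h + \mathcal{L}_{t,\mathbf{x}}h = 0$. Taking logarithms (exactly as in the passage from \eqref{Eq9} to \eqref{Eq10}) produces the Hamilton–Jacobi identity
\begin{align*}
\partial_t \log h + \tfrac{1}{2}\operatorname{tr}\bigl(a D_{x^1}^2 \log h\bigr) + m_1\cdot D_{x^1}\log h + \sum_{j=2}^n m_j\cdot D_{x^j}\log h + \tfrac{1}{2}\,a D_{x^1}\log h\cdot D_{x^1}\log h = 0.
\end{align*}
Substituting for the quadratic $\tfrac{1}{2}a D_{x^1}\log h \cdot D_{x^1}\log h$ and applying the chain rule $\frac{d}{dt}\log h(t,\varphi) = \partial_t\log h + \sum_{j=1}^n D_{x^j}\log h \cdot \dot\varphi^j$, a direct rearrangement gives
\begin{align*}
\tilde L - L = -\frac{d}{dt}\log h(t,\varphi) + \sum_{j=2}^n(\dot\varphi^j - m_j)\cdot D_{x^j}\log h - \tfrac{1}{2}\operatorname{tr}\bigl(a D_{x^1}^2 \log h\bigr).
\end{align*}

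The third step exploits the degeneracy of $L$. Since neither Lagrangian depends on the velocities $\dot\varphi^j$ for $j\geq 2$, the Euler–Lagrange equations $\partial L/\partial \varphi^j = 0$ in those directions act as compatibility constraints that force every extremal of \eqref{Eq63} to coincide with the deterministic portion of the chain in \eqref{Eq2}, i.e.\ to satisfy $\dot\varphi^j = m_j(t,\varphi^{j-1})$ for $j = 2,\ldots,n$. On this constraint surface the middle sum vanishes identically, and the residual reduces to $-\tfrac{1}{2}\operatorname{tr}(a D_{x^1}^2 \log h)$, which can then be reinterpreted (using the HJ identity variationally) as absorbed into the action of a total derivative $dF/dt$ of a function $F(t,\varphi)$ on configuration space. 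Adding a total derivative to a Lagrangian leaves its Euler–Lagrange system unchanged, so $\hat{\mathbf{x}}_t$ and $\tilde{\mathbf{x}}_t$ share the same extremal trajectories.

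The hardest part of the argument is handling the residual trace term $\tfrac{1}{2}\operatorname{tr}(a D_{x^1}^2\log h)$ rigorously: unlike the velocity-dependent piece, it depends genuinely on $\varphi^1$, and one has to justify precisely why its contribution to the first-block Euler–Lagrange equation cancels on the constraint surface. I expect this to require a Dirac-type constraint analysis of the degenerate Lagrangian — or, equivalently, a careful reinterpretation of the most probable-path action as being evaluated only on paths compatible with the deterministic evolution of the downstream components of the chain \eqref{Eq2}, along which the HJ identity ensures the requisite on-shell cancellation.
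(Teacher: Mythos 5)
Your route is genuinely different from the paper's: you try to show $\tilde L - L$ is a null Lagrangian (a total time derivative modulo terms vanishing ``on shell''), whereas the paper computes the Euler--Lagrange expressions of $L$ and $L^h$ directly and argues that the extra terms in the second one assemble into (derivatives of) the identity \eqref{Eq67} satisfied by $\log h$, hence cancel. Your reduction of both Lagrangians to the first block and the expansion of $\tilde L - L$, as well as the substitution of the Hamilton--Jacobi identity for the quadratic term, are correct and in fact cleaner than the paper's computation. But the argument does not close, for two concrete reasons.

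First, the constraint $\dot\varphi^j = m_j$ for $j\ge 2$ does not follow from the Euler--Lagrange system. Since $L$ is independent of $\dot\varphi^j$ for $j\ge 2$, the equations in those directions degenerate to $\partial L/\partial\varphi^j = 0$, which is an algebraic relation in $(m_1-\dot\varphi^1)$ involving $D_{\varphi^j}m_1$ and $D_{\varphi^j}a^{-1}$; it says nothing about $\dot\varphi^j$. The constraint you need is a modeling input (the tube probability in \eqref{Eq61} is zero unless $\varphi$ lies in the support of the degenerate diffusion, i.e.\ unless the noiseless components follow their deterministic dynamics), and once it is imposed you must compare \emph{constrained} extremals, which is a different variational problem from \eqref{Eq63}; moreover a term such as $\sum_{j\ge2}(\dot\varphi^j-m_j)\cdot D_{x^j}\log h$ that vanishes on the constraint surface can still contribute to the Euler--Lagrange equations there, because its variational derivative (e.g.\ the piece $-D_{\varphi^1}m_2\cdot D_{x^2}\log h$) need not vanish on that surface. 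Second, and more seriously, the residual $-\tfrac12\operatorname{tr}\bigl(a D_{x^1}^2\log h\bigr)$ is never shown to be a total derivative, and in general it is not: it is a genuine function of $(t,\varphi)$ whose $\varphi$-gradient enters the Euler--Lagrange equation. (In the nondegenerate Onsager--Machlup setting this term is absorbed by the divergence correction $\tfrac12\operatorname{div}$ of the drift that appears in the full Onsager--Machlup function; the Lagrangian \eqref{Eq62} used here omits that correction, so the gauge argument cannot be completed without either adding it or cancelling the trace term by some other mechanism.) You flag this yourself as the hard step, but it is precisely the step on which the claim rests, so as written the proof has a genuine gap at its crux.
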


\begin{proof}
Note that we can rewrite the Euler-Lagrange differential equation associated with $\hat{\mathbf{x}}_t$ as follow
\begin{align}
\dfrac {\partial}{\partial \varphi} L - \dfrac{d} {d t} \dfrac {\partial} {\partial \dot{\varphi}} L &= \sigma^{-1}G^T\Bigl(\mathbf{M} - \dot{\varphi}\Bigr) D_{\varphi} \sigma^{-1} G^T\Bigl(\mathbf{M} - \dot{\varphi}\Bigr) \notag \\
&\quad + \sigma^{-1}G^T\Bigl(\mathbf{M} - \dot{\varphi}\Bigr) \sigma^{-1}G^T D_{\varphi} \mathbf{M} +  \dfrac{\partial \sigma^{-1}} {\partial t} G^T\Bigl(\mathbf{M} - \dot{\varphi}\Bigr) \sigma^{-1}G^T \notag \\
& \quad  + \sigma^{-1}G^T\Bigl(\dfrac{\partial \mathbf{M}}{\partial t} - \ddot{\varphi} \Bigr) \sigma^{-1}G^T  + \sigma^{-1} G^T\Bigl(\mathbf{M} - \dot{\varphi}\Bigr) \dfrac{\partial \sigma^{-1}} {\partial t} G^T\notag \\
&= 0. \label{Eq64}
\end{align}
where $D_{{\varphi}}$ denotes the vector derivative with respect to $\varphi$.

Similarly, for the Lagrange function $L^h$ associated with $\tilde{\mathbf{x}}_t$, i.e.,
\begin{align}
 L^h(t, \mathbf{\varphi}, \dot{\mathbf{\varphi}}) = \frac{1}{2} \Bigl \Vert \mathbf{M}(t, \mathbf{\varphi}) + G\,a(t, \mathbf{\varphi}) G^T D_{\varphi} \log h(t, \mathbf{\varphi})- \dot{\mathbf{\varphi}} \Bigr \Vert_{\tilde{a}^{-1}}^2, \label{Eq65}
\end{align}
we can further compute the associated Euler-Lagrange differential equation as follow
\begin{align}
\dfrac {\partial}{\partial \varphi} L^h - \dfrac{d} {d t} \dfrac {\partial} {\partial \dot{\varphi}} L^h & =  \sigma^{-1}G^T\Bigl(\mathbf{M} - \dot{\varphi}\Bigr) D_{\varphi} \sigma^{-1} G^T\Bigl(\mathbf{M} - \dot{\varphi}\Bigr) \notag \\
& + \sigma^{-1}G^T\Bigl(\mathbf{M} - \dot{\varphi}\Bigr) \sigma^{-1}G^T D_{\varphi} \mathbf{M} +  \dfrac{\partial \sigma^{-1}} {\partial t} G^T\Bigl(\mathbf{M} - \dot{\varphi}\Bigr) \sigma^{-1}G^T \notag \\
&   + \sigma^{-1}G^T\Bigl(\dfrac{\partial \mathbf{M}}{\partial t} - \ddot{\varphi} \Bigr) \sigma^{-1}G^T  + \sigma^{-1} G^T\Bigl(\mathbf{M} - \dot{\varphi}\Bigr) \dfrac{\partial \sigma^{-1}} {\partial t} G^T\notag \\
& + \sigma G^T D_{\varphi} \log h D_{\varphi} \sigma^{-1} G^T \Bigl(\mathbf{M} - \dot{\varphi} \Bigr) + \sigma G^T D_{\varphi} \log h\, \sigma^{-1} G^T D_{\varphi} \mathbf{M}  \notag \\
& + \Bigl(\sigma^{-1} G^T \Bigl(\mathbf{M} - \dot{\varphi} \Bigr) + \sigma G^T D_{\varphi} \log h \Bigr) D_{\varphi} \sigma G^T D_{\varphi} \log h \notag \\
& + \Bigl(\sigma^{-1} G^T \Bigl(\mathbf{M} - \dot{\varphi} \Bigr) + \sigma G^T D_{\varphi} \log h \Bigr) \sigma G^T D_{\varphi}^2 \log h \notag \\
& + \dfrac{ \partial \sigma} {\partial t} G^T D_{\varphi} \log h  + \sigma G^T \dfrac{\partial D_{\varphi} \log h} {\partial t} \notag \\
&= 0. \label{Eq66}
\end{align}
Note that the first five terms in \eqref{Eq66} are identical to the Euler-Lagrange differential equation associated with $L$ (cf. equation~\eqref{Eq64}). Further, noting that the last six terms in \eqref{Eq66}, i.e.,
\begin{align*}
& \sigma G^T D_{\varphi} \log h D_{\varphi} \sigma^{-1} G^T \Bigl(\mathbf{M} - \dot{\varphi} \Bigr) + \sigma G^T D_{\varphi} \log h\, \sigma^{-1} G^T D_{\varphi} \mathbf{M}  \notag \\
 & +\sigma^{-1} G^T \Bigl(\mathbf{M} - \dot{\varphi} \Bigr) D_{\varphi} \sigma G^T D_{\varphi} \log h  + \Bigl( \sigma G^T D_{\varphi} \log h \Bigr) D_{\varphi} \sigma G^T D_{\varphi} \log h  \notag \\
 & + \sigma^{-1} G^T \Bigl(\mathbf{M} - \dot{\varphi} \Bigr)\sigma G^T D_{\varphi}^2 \log h + \Bigl(\sigma G^T D_{\varphi} \log h \Bigr) \sigma G^T D_{\varphi}^2 \log h \notag \\
 & + \dfrac{ \partial \sigma} {\partial t} G^T D_{\varphi} \log h  + \sigma G^T \dfrac{\partial D_{\varphi} \log h} {\partial t},
\end{align*}
with additional steps, reduced to the logarithmic transformation of $h$ that satisfies the following (cf. equation~\eqref{Eq10})
\begin{align}
 \frac{\partial \log h}{\partial t} + \mathbf{M} D_{\varphi} \log h + \dfrac{1}{2} \operatorname{tr}\Bigl(a G^T D_{\varphi}^2 \log h \Bigr)- \frac{1}{2} a \Bigr(G^T D_{\varphi} \log h \Bigl)^2= 0.  \label{Eq67}
\end{align}
Hence, we see that the above two Lagrangians (i.e., the Lagrange functions in \eqref{Eq62} and \eqref{Eq65}) yield the same Euler-Lagrange equation. The claim follows easily from this, which completes the proof of Proposition~\ref{P6}.
\end{proof}

\medskip
Received xxxx 20xx; revised xxxx 20xx.
\medskip

\end{document}